\documentclass[11pt]{amsart}
\usepackage{amsmath,amssymb,amsthm}
\usepackage[margin=2.7cm]{geometry}
\usepackage{esint}
\usepackage[backend=biber,style=numeric,sorting=nyt,
  maxbibnames=99,giveninits=true,isbn=false,url=false,doi=false]{biblatex}
\newbibmacro{linktitle}[1]{%
  \iffieldundef{doi}
    {\iffieldundef{eprint}{#1}{\href{https://arxiv.org/abs/\thefield{eprint}}{#1}}}
    {\href{https://doi.org/\thefield{doi}}{#1}}}
\DeclareFieldFormat*{title}{\usebibmacro{linktitle}{\mkbibemph{#1}}}
\DeclareFieldFormat{journaltitle}{#1}
\DeclareFieldFormat[article]{volume}{\mkbibbold{#1}}
\DeclareFieldFormat{pages}{#1}
\renewbibmacro{in:}{\ifentrytype{article}{}{\printtext{\bibstring{in}\intitlepunct}}}
\appto{\bibsetup}{\emergencystretch=1em}
\usepackage{hyperref}
\newtheorem{theorem}{Theorem}[section]
\newtheorem{lemma}[theorem]{Lemma}
\newtheorem{proposition}[theorem]{Proposition}

\theoremstyle{definition}

\theoremstyle{remark}
\newtheorem{remark}[theorem]{Remark}

\newcommand{\R}{\mathbb{R}}
\newcommand{\Rnn}{\R^{3\times 3}}
\newcommand{\SO}{\mathrm{SO}(3)}
\newcommand{\cof}{\operatorname{cof}}
\newcommand{\dist}{\operatorname{dist}}

\newcommand{\Id}{\mathrm{Id}}
\newcommand{\avg}{\fint}

\numberwithin{equation}{section}
\title[A frame-indifferent rank-one convex non-quasiconvex function]{A frame-indifferent rank-one convex function\\ in three dimensions which is not quasiconvex}
\author{Sergio Conti, Georg Dolzmann, and Patrick Dondl}
\makeatletter
\def\@setauthors{%
  \begin{center}\normalfont
  {\scshape Sergio Conti$^{1}$, Georg Dolzmann$^{2}$, and Patrick Dondl$^{3}$}\\[3mm]
  {\footnotesize\itshape
  $^1$\,Institut f\"ur Angewandte Mathematik, Universit\"at Bonn, 53115 Bonn, Germany\\[1mm]
  $^2$\,Fakult\"at f\"ur Mathematik, Universit\"at Regensburg, 93040 Regensburg, Germany\\[1mm]
  $^3$\,Fakult\"at f\"ur Mathematik und Physik, Universit\"at Freiburg, 79104 Freiburg, Germany\\}
  \end{center}}
\makeatother

\date{October 22, 2026}

\begin{document}

\begin{abstract}
We construct a function $W:\Rnn\to\R$ which is frame-indifferent,
$W(QF)=W(F)$ for all $Q\in\SO$, rank-one convex, but not quasiconvex.
In a neighbourhood of $\SO$ the function is an explicit polynomial of degree four
in the strain $F^TF-\Id$; the failure of quasiconvexity occurs at $F=\Id$ through
a third-order mechanism of \v{S}ver\'ak type, with test functions whose gradients
are arbitrarily close to $\Id$. The polynomial is obtained from a geometrically linear example,
a rank-one convex, non-quasiconvex quartic polynomial of the symmetric part of the gradient,
by evaluating it at the strain $\frac12(F^TF-\Id)$, adding a uniaxial prestress and a cubic correction, and enlarging the quartic term. The local function is extended to all of $\Rnn$
by adding a frame-indifferent polyconvex function which vanishes near $\SO$. This work was carried out with the assistance of the AI model
Claude Opus 5.5; the proofs of the main results have been formally verified in Lean~4.
\end{abstract}

\maketitle

\section{Introduction}

A continuous function $f:\R^{m\times n}\to\R$ is \emph{quasiconvex} if
\begin{equation}\label{eq:qc}
f(F)\le \avg_{(0,1)^n} f(F+\nabla\psi)\,dx
\qquad\text{for all } F\in\R^{m\times n},\ \psi\in C^\infty_c((0,1)^n;\R^m),
\end{equation}
and \emph{rank-one convex} if $t\mapsto f(F+ta\otimes b)$ is convex for all
$F$, $a\in\R^m$, $b\in\R^n$. Quasiconvexity implies rank-one convexity; the
converse, asked by Morrey \cite{Morrey52} (see also \cite{Ball87}), fails for $m\ge3$, $n\ge2$ by
\v{S}ver\'ak's example \cite{Sverak92}. 
The case $m=2$ has attracted a lot of attention. Rank-one convex functions which are not quasiconvex have been recently constructed for
$n\ge3$ in \cite{Cassese} and for $n\ge4$ in \cite{ABGP}.
 In the
$2\times2$ case, rank-one convexity is known to imply quasiconvexity on diagonal matrices
\cite{Mueller99diag} and on upper triangular matrices \cite{HKL18}, partial results for
homogeneous isotropic integrands are given in \cite{GuerraKristensen22}, the Burkholder function
satisfies the quasiconvexity inequality with respect to Hessians \cite{GuerraBurkholder}, and for quartic
homogeneous polynomials which are invariant under left and right rotations the two notions
coincide \cite{BrunoPasqualotto}; see also \cite{PedregalSverak98,FaracoSzekelyhidi08,AIPS12,KirchheimKristensen16}
for further partial results and approaches to the $2\times2$ problem. 
A proof that
rank-one convexity implies quasiconvexity
in the $2\times2$ case has been proposed in \cite{Pedregal}.
The present paper
concerns the case $m=n=3$.

In nonlinear elasticity the stored energy density $W:\Rnn\to\R$ is
\emph{frame-indifferent},
\begin{equation}\label{eq:fi}
W(QF)=W(F)\qquad\text{for all } Q\in\SO,\ F\in\Rnn.
\end{equation}
Quasiconvexity is the natural condition for the
existence of minimizers \cite{Ball77,Ball02}, while rank-one convexity
(ellipticity) is the condition that can be checked in practice. It is natural
to ask whether 
the two notions are distinct also for frame-indifferent stored energy densities. \v{S}ver\'ak's construction does not respect \eqref{eq:fi}, and we
are not aware of a frame-indifferent example in the literature. Versions of Morrey's
question with the invariance requirements of elasticity are listed as open in
\cite{VMGN22}, mainly in the planar case, where it is also observed that
even for $n\ge3$ no isotropic example seems to be known.
We show that this is the case.

\begin{theorem}\label{thm:main}
There is a function $W\in C^{\infty}(\Rnn)$ which satisfies
\eqref{eq:fi}, is rank-one convex, and is not quasiconvex.
\end{theorem}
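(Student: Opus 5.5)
The plan follows the strategy outlined in the abstract: build a geometrically linear example, lift it to a frame-indifferent polynomial near $\SO$, correct its behaviour near $\Id$, and extend it globally.

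\textbf{Step 1: a linear example.} First I would construct a quartic polynomial $g:\R^{3\times3}_{\rm sym}\to\R$ of the symmetric strain $e(u)=\frac12(\nabla u+\nabla u^T)$. I want $G\mapsto g(\mathrm{sym}\,G)$ to be rank-one convex but not quasiconvex. Following \v{S}ver\'ak, I would choose a three-dimensional subspace $L\subset\R^{3\times3}_{\rm sym}$ of strains of the form $e(u)$ for a periodic field $u$. For $\Id$-oscillations of type $u=(\sin x_1,\sin x_2,\sin(x_1+x_2))$ suitably symmetrized, $L$ contains exactly three symmetric rank-one directions ($a\odot b$ compatible in the linear-elastic sense). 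Then I would set
\[
g(E)=-\pi_1\pi_2\pi_3(E)+\varepsilon(|E|^2+|E|^4)+k|E-P_L E|^2 .
\]
Here $\pi_i$ are the coordinates on $L$. The cubic part averages to a negative value on the periodic test field, while $k$ large enforces rank-one convexity, i.e.\ nonnegativity of the symmetrized acoustic form $\partial^2 g(E)[a\odot b,a\odot b]\ge0$. This uses \v{S}ver\'ak's compactness argument: on $L$, rank-one symmetric directions are only coordinate directions, where the cubic is affine. Non-quasiconvexity is then witnessed at $E=0$ by scaling $u\mapsto\delta u$: the cubic term dominates the quadratic one only after adding a uniaxial prestress that cancels the quadratic contribution along the test direction. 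Concretely, I would test at the point where the quadratic part is degenerate on the relevant directions, as in \v{S}ver\'ak's quartic variant.

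\textbf{Step 2: the nonlinear lift.} Next I would define $W_0(F)=g\big(\frac12(F^TF-\Id)\big)+c(F)$. This function is automatically frame-indifferent. It must be rank-one convex in a neighbourhood $U$ of $\SO$. Writing $F=\Id+G$, the strain equals $\mathrm{sym}\,G+\frac12G^TG$. The extra term $\frac12G^TG$ introduces geometric-stiffness contributions of the form $DW\cdot(a\otimes b)^T(a\otimes b)$ into the second variation. These can be negative at $F\ne\Id$ and are only first order small. This is the role of the prestress: a term $\sigma\,\mathrm{tr}(\ldots)$ giving $DW(\Id)\ne0$ produces $\sigma|a|^2(b\cdot e)^2$, which is nonnegative and fills the null directions of the linear acoustic tensor. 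A cubic correction then absorbs the first-order negative terms, and a larger quartic coefficient makes the second variation uniformly positive on the unit sphere of rank-one directions away from the degenerate set. I would verify this by a Taylor expansion in $G$ plus a compactness argument for $|G|<r$. For non-quasiconvexity, I would test with $\psi=\delta\varphi$ and expand $\avg W_0(\Id+\delta\nabla\varphi)$ in $\delta$. The order-two term vanishes (or is nonnegative and zero for the chosen $\varphi$), and the order-three term reproduces the negative cubic average from Step 1, so the average is below $W_0(\Id)$ for small $\delta$. This keeps gradients within $U$.

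\textbf{Step 3: global extension.} To extend to all of $\Rnn$, I would cut off $W_0$ smoothly outside $U$. I would add $h(F)=\Phi(\dist^2(F,\SO))$-type polyconvex terms, for instance $\Theta(|F|^2)+\Theta(|\cof F|^2)+\Theta(\det F\ldots)$ with convex increasing $\Theta$ vanishing on a neighbourhood of $[0,3]$. These are frame-indifferent, polyconvex, and vanish near $\SO$, and they dominate the Hessian of the cut-off region. This requires that the set where the polyconvex part is strictly convex in rank-one directions covers the transition annulus. I would choose the neighbourhood where $W_0$ is used as a sublevel set of $|F|^2$ and $|\cof F|^2$ near $3$, which contains a neighbourhood of $\SO$, after possibly also including $\det$.

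The main obstacle is Step 2: uniform rank-one convexity near $\SO$ despite the degenerate directions inherited from \v{S}ver\'ak's construction, where the linear acoustic tensor vanishes. In these directions the nonlinear terms could have either sign at first order. I would first try to make the prestress and cubic correction exactly cancel the linear-in-$G$ part of the acoustic form on the degenerate cone, and then check positivity by a quartic polynomial inequality, computer-assisted if necessary.
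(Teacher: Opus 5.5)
There is a genuine gap in the choice of the quadratic part, and it breaks the failure of quasiconvexity at $\Id$.

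\textbf{Step 2: the quadratic part.} With your $g$, the cubic and quartic terms (including any cubic correction) do not contribute to the second variation, so
$\frac12D^2W_0(\Id)[a\otimes b,a\otimes b]=\varepsilon|a\odot b|^2+k|a\odot b-P_L(a\odot b)|^2+(\text{your nonnegative prestress term})\ge\frac\varepsilon2|a|^2|b|^2$.
This is the \emph{strict} Legendre--Hadamard condition. By Plancherel and a Taylor expansion, $\avg W_0(\Id+\nabla\psi)\,dx\ge W_0(\Id)$ for every periodic $\psi$ with $\|\nabla\psi\|_\infty$ small. In particular, the $\delta^2$-coefficient of $\avg W_0(\Id+\delta\nabla\varphi)\,dx$ is at least $\varepsilon\avg|\operatorname{sym}\nabla\varphi|^2\,dx>0$, not zero, so the cubic contribution $-\frac14\delta^3$ cannot win for small $\delta$. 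Test fields with large gradients leave the neighbourhood of $\SO$ on which $W=W_0$, so the cubic mechanism is not available there either.

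A prestress cannot repair this. In the linear setting a linear term averages to zero on periodic fields. In the nonlinear setting, a prestress $\Sigma:e$ contributes $\frac12|a|^2b^T\Sigma b$. Making the form vanish on $R_1,R_2,R_3$ forces $\Sigma_{11}=\Sigma_{22}=-2\varepsilon$ and $\Sigma_{12}=\varepsilon$, and then the form equals $-4\varepsilon$ at $a=e_1+e_2$, $b=e_1-e_2$.

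\textbf{Step 1: the rank-one directions in $L$.} Your claim that $L_{\mathrm{sym}}$ meets $\{a\odot b\}$ only in the three mode lines is false. One has $\det(rM_1+sM_2+wM_3)=-\frac14w^2(r+s)$, and all of $\{w=0,\ rs\le0\}\cup\{r+s=0\}$ consists of matrices $a\odot b$. An example is $\operatorname{diag}(1,-1,0)=(e_1+e_2)\odot(e_1-e_2)$, along which $rsw$ has second derivative $-2w$. The penalty $k|E-P_LE|^2$ does not see such directions, so only $\varepsilon(|E|^2+|E|^4)$ can compensate. This bounds $\varepsilon$ from below and breaks \v{S}ver\'ak's small-$\varepsilon$ compactness argument already in your linear example.

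\textbf{The missing idea.} You need an \emph{indefinite} quadratic form that is nonnegative on $\{a\odot b\}$ and equals $-2rs$ on $L$. The paper's $q(e)=4e_{12}^2-2e_{11}e_{22}+4(e_{13}-e_{23})^2$ does this:
$q(a\odot b)=(a_1b_2)^2+(a_2b_1)^2+((a_1-a_2)b_3+a_3(b_1-b_2))^2$.
It averages to zero on $\varphi$ because $\avg\cos x_1\cos x_2\,dx=0$. Together with the prestress $2e_{33}$, which removes the extra zero $R_3^T$, its zero set on rank-one matrices is exactly $\R R_1\cup\R R_2\cup\R R_3$.

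\textbf{Step 3: the globalization.} On the ray $F=\lambda\Id$, $0<\lambda<1$, one has $|F|^2<3$ and $|\cof F|^2<3$, so only your determinant term can be active. Since $\det(\lambda\Id+ta\otimes b)=\lambda^3+t\lambda^2\,a\cdot b$, any function of $\det F$ is constant along the rank-one lines with $a\perp b$. Your cut-off must pass from $1$ to $0$ along this ray, and its Hessian in these directions is dominated by nothing there. Without a cut-off you would instead need rank-one convexity of $W_0$ near $F=0$, where the penalty again provides none.

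The paper does not cut $W_0$ off. It adds $K\chi(\gamma_\sigma)$ with $\gamma_\sigma=|F-\cof F|^2+(\det F-1)^2+\sigma|F|^2$. This is a convex quadratic function of the minors, its second derivative along every rank-one line is at least $2\sigma|a\otimes b|^2$, and its sublevel set $\{\gamma_\sigma\le5\sigma\}$ lies in any prescribed neighbourhood of $\SO$ for $\sigma$ small. The quartic growth of $\chi$ then dominates $D^2W_0(G)[A,A]\ge-C(1+|G|^2)^3|A|^2$.

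\textbf{Step 2: local convexity.} Your plan (cancel the first-order term at the modes by a cubic correction, then check a positive second-order condition) is the right one. However, ``Taylor expansion in $G$ plus compactness'' must also handle the rotational degeneracy along $\SO$. The paper removes it by writing $\det(F^TF)\,D^2W_0(F)[a\otimes b,a\otimes b]=P(\frac12(F^TF-\Id),F^Ta,b)$. It then normalizes the scaling of $(c,b)$ and verifies that $P$ has a nondegenerate minimum in ten variables at each mode, via the positive definite matrices $A_k$.
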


The function $W$ we construct
coincides in a neighbourhood of $\SO$ with the polynomial $W_0(F)=g(\frac12(F^TF-\Id))$,
$g$ given in \eqref{eq:g} below, and is not quasiconvex at $F=\Id$.

The function is not isotropic (i.e.\ not invariant under $F\mapsto FQ$); the
isotropic case remains open (see \cite[Conjecture~1]{Cassese} for a homogeneous
version, and \cite{GuerraKristensen22} for positive results on homogeneous isotropic
integrands). Recent work has produced several new examples of rank-one convex,
non-quasiconvex integrands, see \cite{Grabovsky18,Cassese,ABGP} and the
references therein; none of them is frame-indifferent. (The examples on $\R^{d\times d}$
in \cite[Theorem~1.4]{Cassese} are invariant under conjugation $F\mapsto QFQ^T$,
$Q\in O(d)$, and under transposition, but not under $F\mapsto QF$.)
In Section~\ref{sec:linear} we first give an elementary example in the geometrically
linear setting, i.e.\ a rank-one convex, non-quasiconvex polynomial which depends only on
the symmetric part of its argument (Theorem~\ref{thm:linear}); the classical
counterexamples depend on the skew-symmetric part and do not apply in this setting
\cite{BKS19}. The nonlinear function $W_0$ is
built from it: $g$ is the linear density plus the prestress $2e_{33}$, a cubic correction and a
larger quartic term, see Remark~\ref{rem:fromlinear}.

\smallskip
\noindent\emph{Strategy.} The proof has three steps.
\begin{enumerate}
\item (Local construction, Section~\ref{sec:local}.) We construct an explicit frame-indifferent
polynomial $W_0$ whose second variation 
on rank-one
matrices at $\Id$,
$Q(a,b)=\frac12D^2W_0(\Id)[a\otimes b,a\otimes b]$, is nonnegative and vanishes exactly on the three \v{S}ver\'ak modes
$R_1=e_1\otimes e_1$, $R_2=e_2\otimes e_2$, $R_3=e_3\otimes(e_1+e_2)$, and whose
cubic term on the span of these modes is $rsu$.
\item (Local rank-one convexity, Section~\ref{sec:rc}.) By frame indifference,
$\det(F^TF)\,D^2W_0(F)[a\otimes b,a\otimes b]$ is a polynomial $P(e,c,b)$ in the invariants
$e=\frac12(F^TF-\Id)$, $c=F^Ta$ and $b$. 
With an appropriate parametrization over $\R^{10}$, 
near each normalized mode $P$ has a nondegenerate minimum 
as one can check by 
 the positive definiteness of explicit $10\times10$ rational matrices
(Lemma~\ref{lemma:hess}). An elementary compactness argument shows that
$W_0$ is rank-one convex in a neighbourhood of $\SO$.
\item (Globalization, Section~\ref{sec:global}.) We add $K\Phi$, where
$\Phi=\chi\bigl(|F-\cof F|^2+(\det F-1)^2+\sigma|F|^2\bigr)$ with
$\chi$ smooth, convex, $\chi=0$ on $(-\infty,4\sigma]$ and $\chi(s)=(s-4\sigma)^4+\mathrm{const}$ for $s\ge5\sigma$. $\Phi$ is polyconvex, frame-indifferent, vanishes in a
neighbourhood of $\SO$, and is uniformly convex along rank-one lines away from a
(slightly larger) neighbourhood of $\SO$, with a modulus that dominates the
growth of $D^2W_0$.
\end{enumerate}

\noindent\emph{Formal verification and use of AI.} The proofs of Theorem~\ref{thm:main}
and Theorem~\ref{thm:linear} have been formalized in the proof assistant Lean~4 \cite{Lean4}, version \texttt{4.34.1},
using the library Mathlib \cite{Mathlib}
in the release \texttt{v4.34.1}, see
 \cite{LeanCode}. The formalization was written by the AI model Claude Opus 5.5
(Anthropic)
under the guidance of the authors. The same tool
was also used in the preparation of the paper; see the statement at the
end of the paper.

\noindent\emph{Notation.} $|\cdot|$ is the Euclidean (Frobenius) norm,
$(\SO)_\delta:=\{F\in\Rnn:\dist(F,\SO)<\delta\}$, $e_1,e_2,e_3$ is the standard basis,
$\avg$ denotes the average.
We write $\operatorname{sym}F:=\frac12(F+F^T)$, $\R^{3\times3}_{\mathrm{sym}}$ for the
symmetric matrices, and $a\odot b:=\frac12(a\otimes b+b\otimes a)$ for $a,b\in\R^3$.
Then $\operatorname{sym}(a\otimes b)=a\odot b$ and
$|a\odot b|^2=\frac12\bigl(|a|^2|b|^2+(a\cdot b)^2\bigr)\ge\frac12|a|^2|b|^2$.

\noindent\emph{Cofactors and minors.} For $F\in\Rnn$, $\cof F$ is the matrix of
cofactors. We use the standard identities
\begin{equation}\label{eq:cof}
F^T\cof F=(\cof F)\,F^T=(\det F)\Id,\qquad\cof(F^T)=(\cof F)^T,\qquad
\cof(AB)=\cof A\,\cof B .
\end{equation}
In particular, $\cof F$ is symmetric if $F$ is, $\cof Q=Q$ for $Q\in\SO$, and
\begin{equation}\label{eq:cofQ}
\cof(QF)=Q\cof F,\qquad\det(QF)=\det F\qquad\text{for all } Q\in\SO,\ F\in\Rnn .
\end{equation}
Let $T(F):=(F,\cof F,\det F)\in\R^9\times\R^9\times\R=\R^{19}$ be the vector of minors of $F$.
The minors are affine along rank-one lines: for $F\in\Rnn$ and $a,b\in\R^3$ there are
$V_{\cof}\in\R^9$, $V_{\det}\in\R$ such that
\begin{equation}\label{eq:Taffine}
T(F+ta\otimes b)=T(F)+t\,(a\otimes b,V_{\cof},V_{\det})\qquad\text{for all }t\in\R .
\end{equation}
Indeed, every minor of $F+ta\otimes b$ is the determinant of a square matrix of the form
$B+t\,c\otimes d$; expanding it by multilinearity in the columns, all terms containing two
or more columns of $t\,c\otimes d$ vanish, since these columns are parallel to $c$.
A function $f:\Rnn\to\R$ which is a convex function of $T(F)$ is called \emph{polyconvex}
\cite{Ball77}; by \eqref{eq:Taffine} it is rank-one convex.

We shall use the following standard fact; see e.g.\ \cite[Prop.~4.19]{Mueller99}, \cite[Prop.~5.13]{Dacorogna}.

\begin{lemma}[Periodic test functions]\label{lemma:periodic}
Let $f:\Rnn\to\R$ be continuous and quasiconvex, $F\in\Rnn$, and let
$\psi\in C^\infty(\R^3;\R^3)$ be $Y$-periodic, $Y=(0,2\pi)^3$. Then
$f(F)\le\avg_Y f(F+\nabla\psi)\,dx$.
\end{lemma}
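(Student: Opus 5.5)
The plan is to reduce the periodic statement to the definition \eqref{eq:qc} by cutting off $\psi$ on a large cube and rescaling. Fix $F$ and the $Y$-periodic $\psi$. For $k\in\mathbb{N}$ consider the cube $Q_k=(0,2\pi k)^3$, which is a union of $k^3$ translated copies of $Y$. Choose a cutoff $\eta_k\in C^\infty_c(Q_k)$ with $0\le\eta_k\le1$, $\eta_k=1$ on the points of $Q_k$ at distance at least $1$ from $\partial Q_k$, and $|\nabla\eta_k|\le C$ with $C$ independent of $k$. Set $\varphi_k:=\eta_k\psi\in C^\infty_c(Q_k;\R^3)$. Rescaling $Q_k$ to $(0,1)^3$ by $x\mapsto 2\pi k\,x$, i.e.\ using $\tilde\varphi_k(y):=(2\pi k)^{-1}\varphi_k(2\pi k y)$, gives $\nabla\tilde\varphi_k(y)=\nabla\varphi_k(2\pi k y)$. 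Quasiconvexity \eqref{eq:qc} then yields
\[
f(F)\le \avg_{Q_k} f(F+\nabla\varphi_k)\,dx .
\]

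Next, split $Q_k$ into the interior region $I_k$ where $\eta_k=1$ and the boundary layer $B_k=Q_k\setminus I_k$. On $I_k$ we have $\nabla\varphi_k=\nabla\psi$. On $B_k$ we have $\nabla\varphi_k=\eta_k\nabla\psi+\psi\otimes\nabla\eta_k$, which is bounded uniformly in $k$ because $\psi$ and $\nabla\psi$ are bounded by periodicity and $|\nabla \eta_k|\le C$. Hence $F+\nabla\varphi_k$ stays in a fixed compact set, on which $f$ is bounded by some $M$ since $f$ is continuous. The layer satisfies $|B_k|\le C' k^2$ while $|Q_k|=(2\pi k)^3$, so the contribution of $B_k$ to the average is $O(1/k)$.

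For the interior, $I_k$ differs from the union of the periodicity cells contained in it by a set that also lies in an $O(1)$-neighbourhood of $\partial Q_k$. Replacing $I_k$ by the union of full cells, on which by periodicity the average of $f(F+\nabla\psi)$ equals $\avg_Y f(F+\nabla\psi)$, therefore again costs only $O(1/k)$. This gives
\[
f(F)\le \avg_Y f(F+\nabla\psi)\,dx + O(1/k),
\]
and letting $k\to\infty$ proves the claim.

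The only point needing care is the uniform bound on $\nabla\varphi_k$ in the boundary layer, which is what allows the crude bound $|f|\le M$ there. This is where periodicity matters: it makes $\psi$ itself bounded, and not just $\nabla\psi$. No additional growth assumption on $f$ is needed.
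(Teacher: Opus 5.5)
Your proof is correct. Cutting off $\psi$ on $(0,2\pi k)^3$ with a cutoff whose gradient bound does not depend on $k$, and rescaling to $(0,1)^3$, is the standard argument. The key observation is that $F+\nabla\varphi_k$ stays in a fixed compact set, so the boundary layer of volume $O(k^2)$ and the incomplete cells cost only $O(1/k)$ in the average. The paper gives no proof of its own: it cites this as a standard fact from the books of M\"uller and Dacorogna. The proof in that literature uses the same cutoff-and-rescaling idea, usually written on the unit cube with a rescaled oscillating copy of $\psi$ and a cutoff of width $O(1/k)$, which is your construction up to a change of scale.
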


\section{The geometrically linear case}\label{sec:linear}

In linearized elasticity, frame indifference is replaced by invariance under
infinitesimal rotations, $f(F+A)=f(F)$ for all skew-symmetric $A$, i.e.\
$f(F)=\tilde g(\operatorname{sym}F)$. In this
setting the mechanism of \v{S}ver\'ak \cite{Sverak92} gives an elementary example,
which is a polynomial of degree four and requires neither a local analysis near the
modes nor a globalization step. It is also the starting point of the nonlinear
construction in Sections~\ref{sec:local}--\ref{sec:global}, see Remark~\ref{rem:fromlinear}.
We are not aware of a previous example of this type: as observed in \cite{BKS19}, the
classical counterexamples, including \v{S}ver\'ak's, depend in an essential way on the
skew-symmetric part of the gradient. A related but different statement is
\cite[Theorem~1.5]{ABGP}, which gives a quartic polynomial on
$\R^{3\times3}_{\mathrm{sym}}$ that is convex along the directions $a\otimes a$ but not
quasiconvex with respect to Hessians $D^2\Psi$ of scalar functions. Here instead the test
fields are vector valued and convexity is required along all directions
$\operatorname{sym}(a\otimes b)$; the test field $(\sin x_1,\sin x_2,-\sin(x_1+x_2))$ used below is not a gradient.
For Hessians of functions of two variables, the corresponding cubic $xyz$ on
$\operatorname{span}\{e_1\otimes e_1,e_2\otimes e_2,(e_1+e_2)\otimes(e_1+e_2)\}$ was studied by
Pedregal and \v{S}ver\'ak \cite{PedregalSverak98}, who showed that its quasiconvex and rank-one
convex envelopes coincide at $0$, so that the mechanism gives no counterexample there. In our
construction the first two modes are of this form, but the third one,
$M_3=e_3\odot(e_1+e_2)$ in \eqref{eq:Mk}, is the symmetrized gradient of the plane wave
$e_3\sin(x_1+x_2)$ and is not of the form $k\otimes k$.
While finalizing this paper, we were informed that a result similar to Theorem~\ref{thm:linear} has been obtained independently~\cite{Gammer26}.

\begin{theorem}\label{thm:linear}
There is a function $f\in C^{\infty}(\Rnn)$ which satisfies
$f(F+A)=f(F)$ for all $F\in\Rnn$
and all skew-symmetric $A\in\Rnn$, is rank-one convex, and is not quasiconvex.
\end{theorem}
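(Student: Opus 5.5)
We follow Šverák's mechanism, working on symmetric matrices. The three modes are
\[
M_1=e_1\otimes e_1,\qquad M_2=e_2\otimes e_2,\qquad M_3=e_3\odot(e_1+e_2).
\]
Each is the symmetrized gradient of a plane wave: $M_1$ comes from $e_1\sin x_1$, $M_2$ from $e_2\sin x_2$, and $M_3$ from $e_3\sin(x_1+x_2)$ up to a factor, as noted before the statement. Let $L=\operatorname{span}\{M_1,M_2,M_3\}$, and let $P_L$ be an orthogonal projection of $\R^{3\times3}_{\mathrm{sym}}$ onto $L$ with coordinates $(r,s,u)$.

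We define, for $E=\operatorname{sym}F$,
\[
g_{\varepsilon,k}(E)=rsu+\varepsilon\bigl(|E|^2+|E|^4\bigr)+k\,q(E-P_LE),
\]
with $f(F)=g(\operatorname{sym}F)$. Here $q$ is a positive definite quadratic form on $L^\perp$, or more flexibly a nonnegative quadratic form vanishing exactly on $L$. Because $f$ depends only on $\operatorname{sym}F$, it is smooth and invariant under skew perturbations. The paper says the linear example is quartic, so this polynomial form is what we aim for.

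\emph{Failure of quasiconvexity at $0$.} Take the $2\pi$-periodic field
\[
\psi(x)=\bigl(\sin x_1,\ \sin x_2,\ -\sin(x_1+x_2)\bigr)
\]
(suitably scaled), the field named in the text. Its symmetrized gradient is
\[
\operatorname{sym}\nabla\psi=\cos x_1\,M_1+\cos x_2\,M_2-\cos(x_1+x_2)\,M_3\in L,
\]
so $q$ contributes nothing. The average of $rsu$ is $-\avg\cos x_1\cos x_2\cos(x_1+x_2)=-\tfrac14<0$, while the quadratic and quartic terms are $O(\varepsilon)$. Scaling $\psi\mapsto t\psi$ gives an average of the form $-\tfrac14t^3+\varepsilon(C_2t^2+C_4t^4)$. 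For fixed small $\varepsilon$ and a suitable $t$ of order $\sqrt{\varepsilon}$, or simply $t=1$ with $\varepsilon$ small, this is negative. Lemma~\ref{lemma:periodic} then shows that $f$ is not quasiconvex.

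\emph{Rank-one convexity.} We must show $D^2g(E)[a\odot b,a\odot b]\ge0$ for all $E,a,b$. By homogeneity take $|a\odot b|=1$. The Hessian of $rsu$ is linear in $E$, bounded by $C|E|$. The quartic term gives at least $c\varepsilon|E|^2$ in every direction, plus the $2\varepsilon$ from the quadratic term. Hence for $|E|$ large compared with $1/\varepsilon$ the quartic term dominates, and for small $|E|$ we need the cubic's Hessian restricted to rank-one directions to be controlled. The key structural fact is that the rank-one directions lying in $L$ are exactly the lines through $M_1,M_2,M_3$, on which $rsu$ is affine. One checks that a symmetrized rank-one $a\odot b$ in $L$ must be a multiple of some $M_i$.

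Then a compactness/Lagrange estimate gives $\dist(a\odot b,L)^2\ge c\min_i\dist(a\odot b,\R M_i)^2$ on the unit sphere. Near $M_i$ we have
\[
D^2(rsu)[X,X]=2(r_Xs_Xu+\dots),
\]
which vanishes to first order in the deviation from $M_i$. This gives $|D^2(rsu)(E)[X,X]|\le C|E|\,\delta$, where $\delta=\dist(X,\R M_i)$, and Cauchy--Schwarz bounds this by
\[
C|E|\,\delta\le \tfrac12 k\delta^2+\tfrac{C^2}{2k}|E|^2.
\]
Choose $k$ large, with $k\,q\ge k c\,\delta^2$, so that $C^2/(2k)\le c\varepsilon$; the $\varepsilon|E|^4$ term then absorbs the remainder, its Hessian in direction $X$ being $\ge 4\varepsilon(|E|^2|X|^2)\cdot$const.

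The main obstacle is this step. It requires verifying that the only normalized rank-one symmetric directions in $L$ are $\pm M_i$, and obtaining the uniform quadratic bound of the cubic Hessian by the distance to the modes. If the cross terms near $M_3$ do not vanish to first order, I would instead replace $rsu$ by $rsu$ plus a correction, or take $q$ to be a non-isotropic form tuned by an explicit $6\times6$ computation.
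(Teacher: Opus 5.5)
There is a genuine gap, and it is exactly at the step you flagged as the main obstacle. Your structural claim is that the only symmetrized rank-one matrices in $L$ are multiples of $M_1,M_2,M_3$. This is false. For $E=rM_1+sM_2+wM_3$ one has $\det E=-\frac14 w^2(r+s)$. The whole plane $\{r+s=0\}$ and the sector $\{w=0,\ rs\le0\}$ consist of matrices of the form $a\odot b$; for instance $M_1-M_2=(e_1+e_2)\odot(e_1-e_2)$.

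So the bound $\dist(a\odot b,L)^2\ge c\min_i\dist(a\odot b,\R M_i)^2$ fails, and the penalization $k\,q(E-P_LE)$ contributes nothing in these directions. Concretely, take $X=M_1-M_2$ and $E=uM_3$. Your density gives
\[
D^2g(E)[X,X]=-2u+4\varepsilon+8\varepsilon u^2 ,
\]
independently of $k$. This is nonnegative for all $u\in\R$ only if $\varepsilon^2\ge\frac1{32}$. On the other hand, the mean of your density on $t\psi$ is $-\frac14t^3+\varepsilon\bigl(\frac32t^2+\frac{21}{8}t^4\bigr)$, which is negative for some $t$ only if $\varepsilon^2<\frac1{252}$. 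So the scheme ``fix $\varepsilon$ small, then take $k$ large'' cannot work. Your fallback of retuning the form on $L^\perp$ cannot repair it either, because the bad directions lie in $L$ itself.

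What is missing is a quadratic part that acts on $L$ itself and is indefinite. The paper takes
\[
q(e)=4e_{12}^2-2e_{11}e_{22}+4(e_{13}-e_{23})^2 .
\]
This equals $-2rs$ on $L$, which is nonnegative on all the extra rank-one directions there. On the cone it equals $|\beta|^2$, where $\beta=\bigl(a_1b_2,\,a_2b_1,\,(a_1-a_2)b_3+a_3(b_1-b_2)\bigr)$.

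The identities $r(M)s(M)=\beta_1\beta_2$ and $r(M)v(M)=\frac12a_1b_1\beta_3+a_1b_3\beta_2+a_3b_1\beta_1$, together with the analogue for $s(M)v(M)$, give $D^2(rsv)(e)[M,M]\ge-2\sqrt6\,|M|\,|\beta|\,|e|$. Here $rsv$ is the cubic $e_{11}e_{22}(e_{13}+e_{23})$, which is the same as your $rsu$. Adding $\frac34|e|^4$, the second variation is at least $(\sqrt2|\beta|-\sqrt3|M||e|)^2\ge0$ everywhere. This needs no compactness argument and no small or large parameters.

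Because $\avg\cos x_1\cos x_2\,dx=0$, the indefinite quadratic part averages to zero on the test field. Quasiconvexity therefore already fails at small amplitude, with mean $-\frac{\varepsilon^3}{4}+\frac{63}{32}\varepsilon^4<0$.
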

\begin{proof}
For $e=(e_{ij})\in\R^{3\times3}_{\mathrm{sym}}$ let %$|e|^2=\sum_{i,j}e_{ij}^2$,
\begin{equation}\label{eq:q}
q(e):=4e_{12}^2-2e_{11}e_{22}+4(e_{13}-e_{23})^2,
\end{equation}
\begin{equation}\label{eq:glin}
g_{\mathrm{lin}}(e):=q(e)+e_{11}e_{22}(e_{13}+e_{23})+\tfrac34|e|^4.
\end{equation}
We define $f$ as the polynomial
\begin{equation}\label{eq:flin}
f(F):=g_{\mathrm{lin}}(\operatorname{sym}F),\qquad F\in\Rnn .
\end{equation}
% The function $f$ is the polynomial defined in \eqref{eq:flin}.
The invariance is clear, since $\operatorname{sym}(F+A)=\operatorname{sym}F$. For
$F\in\Rnn$ and $a,b\in\R^3$, the second derivative of $t\mapsto f(F+ta\otimes b)$ at $t$ is nonnegative by
Lemma~\ref{lemma:linrc} below; hence $f$ is rank-one convex. Lemma~\ref{lemma:linnotqc}
below shows that $f$ is not quasiconvex at $0$.
\end{proof}

\begin{remark}\label{rem:linmodes}
The quadratic form $q$ in~\eqref{eq:q} is indefinite on $\R^{3\times3}_{\mathrm{sym}}$, e.g.\
$q(\operatorname{diag}(1,1,0))=-2$, but nonnegative on the cone $\{a\odot b\}$ of
symmetrized rank-one matrices, see \eqref{eq:qB} below. It vanishes on the symmetrized
\v{S}ver\'ak modes
\begin{equation}\label{eq:Mk}
M_1:=e_1\otimes e_1,\qquad M_2:=e_2\otimes e_2,\qquad M_3:=e_3\odot(e_1+e_2),
\end{equation}
but also on other symmetrized rank-one matrices, e.g.\ on
$\operatorname{diag}(1,0,-1)=(e_1+e_3)\odot(e_1-e_3)$. This does not matter: the proof of
rank-one convexity only uses that the products of the three linear expressions $e_{11}$, $e_{22}$,
$e_{13}+e_{23}$, which appear in the second variation of the cubic term, are controlled by
$\sqrt q$ on $\{a\odot b\}$, see \eqref{eq:linprod} below. In contrast to the nonlinear
setting of Section~\ref{sec:local}, the plane $L_{\mathrm{sym}}:=\operatorname{span}\{M_1,M_2,M_3\}$
contains many more symmetrized rank-one directions: for
$e=rM_1+sM_2+wM_3$ one has $\det e=-\frac14w^2(r+s)$, and all of $\{w=0,\ rs\le0\}$ and
$\{r+s=0\}$ consists of matrices of the form $a\odot b$. This is why $q$ cannot be
taken as $-rs$ on $L_{\mathrm{sym}}$ plus a penalization of the orthogonal complement,
as in \cite{Sverak92}; on $L_{\mathrm{sym}}$ one has $q=-2rs$, which is nonnegative on
these additional directions.
\end{remark}

\begin{remark}[Minimizers]\label{lemma:linmin}
 One can check that the set of minimizers of $f$ consists of the matrices whose symmetric part is either $e_+$ or $e_-$, where $e_\pm:=\pm\sqrt{w_*(w_*-2)}(M_1+M_2)+w_*M_3$, with
 $w_*:=\frac{11-\sqrt{193}}{18}$. To see this, one first projects to 
 $L_{\mathrm{sym}}$, then restricts to $r=s$, and finally sets the two partial derivatives equal to zero.
\end{remark}

\begin{lemma}[Failure of quasiconvexity]\label{lemma:linnotqc}
Let $f$ be as in \eqref{eq:flin}, $\varepsilon>0$, and let $u_\varepsilon:\R^3\to\R^3$,
\[
u_\varepsilon(x):=\varepsilon\bigl(\sin x_1,\ \sin x_2,\ -\sin(x_1+x_2)\bigr).
\]
Then
\[
\avg_Y f(\nabla u_\varepsilon)\,dx=-\frac{\varepsilon^3}{4}+\frac{63}{32}\varepsilon^4 .
\]
In particular, $f$ is not quasiconvex at $F=0$.
\end{lemma}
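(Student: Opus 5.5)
The plan is a direct computation of the integrand, followed by averaging over $Y$ term by term. First we compute the symmetrized gradient $e=\operatorname{sym}\nabla u_\varepsilon$. Write $c_1=\cos x_1$, $c_2=\cos x_2$, $c_3=\cos(x_1+x_2)$. Since $u_1$ depends only on $x_1$, $u_2$ only on $x_2$, and $u_3$ only on $x_1,x_2$, we get
\[
e_{11}=\varepsilon c_1,\quad e_{22}=\varepsilon c_2,\quad e_{33}=0,\quad e_{12}=0,\quad e_{13}=e_{23}=-\tfrac12\varepsilon c_3 .
\]
Substituting into \eqref{eq:q} gives $q(e)=-2\varepsilon^2c_1c_2$, because $e_{12}=0$ and $e_{13}-e_{23}=0$. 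The cubic term is
\[
e_{11}e_{22}(e_{13}+e_{23})=-\varepsilon^3c_1c_2c_3 ,
\]
and counting the four off-diagonal entries gives
\[
|e|^2=\varepsilon^2(c_1^2+c_2^2+c_3^2).
\]

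Next we average each term over $Y$, using orthogonality of trigonometric monomials.

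\emph{Quadratic term.} $\avg_Y c_1c_2\,dx=0$, so $q$ contributes nothing at order $\varepsilon^2$. This is the Šverák mechanism: the quadratic part vanishes on the modes.

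\emph{Cubic term.} Expanding $c_3=c_1c_2-\sin x_1\sin x_2$ gives
\[
\avg_Y c_1c_2c_3\,dx=\avg_Y c_1^2c_2^2\,dx-\avg_Y c_1\sin x_1\,c_2\sin x_2\,dx=\tfrac14-0 .
\]
So the cubic term contributes $-\varepsilon^3/4$.

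\emph{Quartic term.} We need $\avg_Y(c_1^2+c_2^2+c_3^2)^2\,dx$. Each $c_i^4$ averages to $\frac38$, since $c_3$ is still a cosine of a single variable after the substitution $x_2\mapsto x_1+x_2$, which is measure preserving by periodicity. Each cross term $c_i^2c_j^2$ with $i\ne j$ averages to $\frac14$; for pairs involving $c_3$, we first integrate in the variable not shared with the other factor. This gives $\frac98+2\cdot\frac34=\frac{21}{8}$, and multiplying by $\frac34$ yields $\frac{63}{32}\varepsilon^4$. Summing the three contributions gives the stated formula.

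For the failure of quasiconvexity, note that $f(0)=0$. For $0<\varepsilon<\frac{8}{63}$ we have $-\frac{\varepsilon^3}{4}+\frac{63}{32}\varepsilon^4<0=f(0)$. Since $u_\varepsilon$ is smooth and $Y$-periodic, Lemma~\ref{lemma:periodic} with $F=0$ shows that $f$ cannot be quasiconvex. The only step needing care is the quartic average, in particular the correct weighting of the off-diagonal entries in $|e|^2$ and the mixed averages involving $c_3$; everything else is routine.
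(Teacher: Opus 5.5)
Your proposal is correct and follows the paper's proof. You compute $\operatorname{sym}\nabla u_\varepsilon$; the paper writes it as $rM_1+sM_2+wM_3$, which gives the same entries. You then reduce $f(\nabla u_\varepsilon)$ to $-2\varepsilon^2c_1c_2-\varepsilon^3c_1c_2c_3+\frac34\varepsilon^4(c_1^2+c_2^2+c_3^2)^2$, use the same three averages $0$, $\frac14$, $\frac{21}{8}$, and conclude with Lemma~\ref{lemma:periodic} for $0<\varepsilon<\frac{8}{63}$. The only difference is that you justify these averages explicitly, whereas the paper simply states them.
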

\begin{proof}
With $M_k$ as in \eqref{eq:Mk}, one has
$\operatorname{sym}\nabla u_\varepsilon=rM_1+sM_2+wM_3$ with $r=\varepsilon\cos x_1$,
$s=\varepsilon\cos x_2$, $w=-\varepsilon\cos(x_1+x_2)$, i.e.\ $e_{11}=r$, $e_{22}=s$,
$e_{13}=e_{23}=\frac w2$ and all other entries vanish. Hence
\[
f(\nabla u_\varepsilon)=-2rs+rsw+\tfrac34(r^2+s^2+w^2)^2 .
\]
Using $\avg_Y\cos x_1\cos x_2\, dx=0$, $\avg_Y\cos x_1\cos x_2\cos(x_1+x_2)\, dx=\frac14$ and
$\avg_Y(\cos^2x_1+\cos^2x_2+\cos^2(x_1+x_2))^2\, dx=\frac{21}8$, we obtain the formula.
For $0<\varepsilon<\frac{8}{63}$ the right-hand side is negative, so that
$\avg_Y f(\nabla u_\varepsilon)\,dx<0=f(0)$. Since $u_\varepsilon$ is $Y$-periodic,
Lemma~\ref{lemma:periodic} shows that $f$ is not quasiconvex.
\end{proof}

\begin{lemma}[Rank-one convexity]\label{lemma:linrc}
Let $f$ be as in \eqref{eq:flin}, $F\in\Rnn$, and $a,b\in\R^3$. Then
\[
D^2f(F)[a\otimes b,a\otimes b]=\frac{d^2}{dt^2}\Big|_{t=0}f(F+ta\otimes b)\ge0 .
\]
\end{lemma}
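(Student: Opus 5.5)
We plan to compute the second variation directly and show that the indefinite contribution of the cubic term is absorbed by $q$ together with the quartic term. Write $e:=\operatorname{sym}F$ and $B:=a\odot b$. Since $f(F+ta\otimes b)=g_{\mathrm{lin}}(e+tB)$, we get
\[
D^2f(F)[a\otimes b,a\otimes b]=2q(B)+2\bigl(e_{11}Z_{3}+e_{22}Z_{2}+(e_{13}+e_{23})Z_{1}\bigr)+6(e:B)^2+3|e|^2|B|^2 ,
\]
where $Z_1:=B_{11}B_{22}$, $Z_2:=B_{11}(B_{13}+B_{23})$ and $Z_3:=B_{22}(B_{13}+B_{23})$. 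The middle term is the second variation of $e_{11}e_{22}(e_{13}+e_{23})$. The last two terms come from $\frac34|e+tB|^4$, using $\frac{d^2}{dt^2}|e+tB|^4=8(e:B)^2+4|e|^2|B|^2$. We drop $6(e:B)^2\ge0$.

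To control the cubic term we use $|e|^2\ge e_{11}^2+e_{22}^2+2e_{13}^2+2e_{23}^2\ge e_{11}^2+e_{22}^2+(e_{13}+e_{23})^2$. By Cauchy--Schwarz the cubic contribution is then at least $-2|e|\,(Z_1^2+Z_2^2+Z_3^2)^{1/2}$. The core of the proof is the polynomial inequality
\begin{equation}\label{eq:linprod}
Z_1^2+Z_2^2+Z_3^2\le \kappa\, q(a\odot b)\,|a\odot b|^2\qquad\text{for all }a,b\in\R^3,
\end{equation}
with a constant $\kappa\le6$. Granting \eqref{eq:linprod}, Young's inequality gives
\[
2|e|\sqrt{\kappa q}\,|B|\le 2q+\tfrac{\kappa}{2}|e|^2|B|^2\le 2q+3|e|^2|B|^2 ,
\]
and hence $D^2f(F)[a\otimes b,a\otimes b]\ge0$.

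To prove \eqref{eq:linprod} we first compute $q$ on symmetrized rank-one matrices. With $B_{11}=a_1b_1$, $B_{22}=a_2b_2$, $B_{12}=\frac12(a_1b_2+a_2b_1)$ and $B_{13}-B_{23}=\frac12\bigl((a_1-a_2)b_3+a_3(b_1-b_2)\bigr)$, one finds
\begin{equation}\label{eq:qB}
q(a\odot b)=a_1^2b_2^2+a_2^2b_1^2+\bigl((a_1-a_2)b_3+a_3(b_1-b_2)\bigr)^2\ge0 .
\end{equation}
The bound for $Z_1$ is immediate: $|Z_1|=|a_1b_2|\,|a_2b_1|\le\frac12 q$, and $q\le C|B|^2$ by \eqref{eq:qB}. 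For $Z_2=\frac12 a_1b_1\bigl((a_1+a_2)b_3+a_3(b_1+b_2)\bigr)$, set $u:=a_1b_2$, $v:=a_2b_1$ and $w:=(a_1-a_2)b_3+a_3(b_1-b_2)$, so that $q=u^2+v^2+w^2$. We then rewrite $a_1b_1\bigl((a_1+a_2)b_3+a_3(b_1+b_2)\bigr)$ modulo multiples of $u$, $v$, $w$, with cofactors of degree two bounded by $|a||b|$. For example, $a_1b_1(a_1+a_2)b_3=a_1b_1\bigl(w-a_3(b_1-b_2)\bigr)+2a_1a_2b_1b_3$, and $a_2b_1=v$. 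The same treatment applies to $Z_3$ by the symmetry $1\leftrightarrow2$. This would give $|Z_i|\le C\sqrt q\,|a||b|\le C'\sqrt q\,|B|$, using $|B|\ge|a||b|/\sqrt2$.

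The main obstacle is getting the constant $\kappa\le6$, not merely a finite one. If the crude bookkeeping above gives a larger constant, the fallback is to keep the discarded term $6(e:B)^2$ and to enlarge the share of $q$. Failing that, we would verify the degree-8 inequality $6q|B|^2-(Z_1^2+Z_2^2+Z_3^2)\ge0$ in $(a,b)$ directly, by an explicit sum-of-squares decomposition or a case analysis on the signs of $a_1b_1$ and $a_2b_2$. This is the computationally delicate step.
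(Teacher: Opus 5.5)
Your expansion of the second variation, the Cauchy--Schwarz bound on the cubic term via $|e|^2\ge e_{11}^2+e_{22}^2+(e_{13}+e_{23})^2$, and the closing Young inequality are all correct. They coincide with Steps~1, 3 and~4 of the paper's proof; the paper's square $(\sqrt2|\beta|-\sqrt3|M||e|)^2$ is your Young step with $\kappa=6$.

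The gap is that the one substantive step, the product inequality with $\kappa\le6$, is left unproved, and the estimate you indicate would not give it. Suppose you bound each degree-two cofactor separately by $|a||b|$ and add up with the triangle inequality. Then at best $|Z_2|\le\frac52|a||b|\sqrt q$, i.e.\ $Z_2^2\le\frac{25}{2}\,q|B|^2$, already well above $6q|B|^2$. There is also no further share of $q$ to enlarge: Young's inequality already spends all of $2q$, so $\kappa\le6$ is exactly the threshold of this argument. The fallbacks are unnecessary anyway.

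The missing step is to carry your rewriting through exactly and then apply Cauchy--Schwarz to the whole coefficient vector at once, not term by term.
\begin{itemize}
\item The terms $\pm a_1b_1a_3b_1$ cancel, giving the identity $Z_2=\frac12a_1b_1\,w+a_1b_3\,v+a_3b_1\,u$. Alongside it, $Z_1=uv$.
\item Since $a_1b_1$, $a_1b_3$, $a_3b_1$ are distinct entries of $a\otimes b$, Cauchy--Schwarz gives $Z_2^2\le(\frac14a_1^2b_1^2+a_1^2b_3^2+a_3^2b_1^2)(u^2+v^2+w^2)\le|a|^2|b|^2q$.
\item Likewise $Z_1^2=u^2v^2\le|a|^2|b|^2q$.
\item $Z_3^2\le|a|^2|b|^2q$ follows from the $1\leftrightarrow2$ symmetry, which maps $(u,v,w)\mapsto(v,u,-w)$.
\end{itemize}
With $|a|^2|b|^2\le2|B|^2$ this gives $Z_1^2+Z_2^2+Z_3^2\le3|a|^2|b|^2q\le6q|B|^2$, which is exactly $\kappa=6$. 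This is Step~2 of the paper's proof.

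Your own bound for $Z_1$ also works and is even sharper. One has $q(X)\le4|X|^2$ for every symmetric $X$, so $|Z_1|\le\frac12q$ yields $Z_1^2\le q|B|^2$.
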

\begin{proof}
Let $e:=\operatorname{sym}F$, $M:=a\odot b$,
\[
\beta_1:=a_1b_2,\qquad \beta_2:=a_2b_1,\qquad \beta_3:=(a_1-a_2)b_3+a_3(b_1-b_2),\]
and, for symmetric $X$, $r(X):=X_{11}$, $s(X):=X_{22}$, $v(X):=X_{13}+X_{23}$, so that
the cubic term of $g_{\mathrm{lin}}$ is $rsv$. Since $\operatorname{sym}(F+ta\otimes b)=e+tM$,
\begin{equation}\label{eq:linD2}
D^2f(F)[a\otimes b,a\otimes b]=D^2g_{\mathrm{lin}}(e)[M,M]
=2q(M)+D^2(rsv)(e)[M,M]+\tfrac34D^2(|\cdot|^4)(e)[M,M],
\end{equation}
and we estimate the three terms separately.

\emph{Step 1: the quadratic term.} We have
\begin{equation}\label{eq:qB}
q(a\odot b)=(a_1b_2)^2+(a_2b_1)^2+\bigl((a_1-a_2)b_3+a_3(b_1-b_2)\bigr)^2 =|\beta|^2.
\end{equation}
Indeed, $4M_{12}^2-2M_{11}M_{22}=(a_1b_2+a_2b_1)^2-2(a_1b_2)(a_2b_1)=\beta_1^2+\beta_2^2$ and
$2(M_{13}-M_{23})=(a_1-a_2)b_3+a_3(b_1-b_2)=\beta_3$.

\emph{Step 2: products of the linear functionals.} We show
\begin{equation}\label{eq:linprod}
\bigl(r(M)s(M)\bigr)^2+\bigl(r(M)v(M)\bigr)^2+\bigl(s(M)v(M)\bigr)^2\le3|a|^2|b|^2|\beta|^2 .
\end{equation}
First,
\begin{equation}\label{eq:linid}
r(M)s(M)=\beta_1\beta_2,\qquad
r(M)v(M)=\tfrac12a_1b_1\,\beta_3+a_1b_3\,\beta_2+a_3b_1\,\beta_1 .
\end{equation}
The first identity is $a_1b_1a_2b_2=(a_1b_2)(a_2b_1)$. For the second,
$2v(M)=(a_1+a_2)b_3+a_3(b_1+b_2)=\beta_3+2a_2b_3+2a_3b_2$, and $a_1b_1a_2b_3=a_1b_3\beta_2$,
$a_1b_1a_3b_2=a_3b_1\beta_1$. By \eqref{eq:linid} and the Cauchy--Schwarz inequality,
$(r(M)s(M))^2\le|a|^2|b|^2\beta_1^2\le|a|^2|b|^2|\beta|^2$ and
\[
\bigl(r(M)v(M)\bigr)^2\le\bigl(\tfrac14a_1^2b_1^2+a_1^2b_3^2+a_3^2b_1^2\bigr)(\beta_1^2+\beta_2^2+\beta_3^2)\le|a|^2|b|^2|\beta|^2.
\]
Exchanging the first and second components of $a$ and of $b$ replaces $\beta$ by
$(\beta_2,\beta_1,-\beta_3)$, leaves $|\beta|$ and $v(M)$ invariant, and exchanges $r(M)$ and
$s(M)$. Hence we also obtain
$(s(M)v(M))^2\le|a|^2|b|^2|\beta|^2$, and \eqref{eq:linprod} follows.

\emph{Step 3: the cubic term.} We have
\begin{align*}
D^2(rsv)(e)[M,M]&=2\bigl(r(e)\,s(M)v(M)+s(e)\,r(M)v(M)+v(e)\,r(M)s(M)\bigr)\\
&=2\langle e,\ s(M)v(M)M_1+r(M)v(M)M_2+r(M)s(M)M_3\rangle,
\end{align*}
with $M_1,M_2,M_3$ as in \eqref{eq:Mk}. Since $M_1,M_2,M_3$ are orthonormal, \eqref{eq:linprod} and $|a|^2|b|^2\le2|M|^2$ give
\[
D^2(rsv)(e)[M,M]\ge-2\sqrt{3}\,|a||b|\,|\beta|\,|e|\ge-2\sqrt6\,|M|\,|\beta|\,|e| .
\]

\emph{Step 4: conclusion.} Finally,
$D^2(|\cdot|^4)(e)[M,M]=4|e|^2|M|^2+8\langle e,M\rangle^2\ge4|e|^2|M|^2$. Inserting this,
\eqref{eq:qB} and Step~3 into \eqref{eq:linD2} we obtain
\[
D^2f(F)[a\otimes b,a\otimes b]\ge 2|\beta|^2-2\sqrt6\,|M|\,|\beta|\,|e|+3|M|^2|e|^2
=\bigl(\sqrt2|\beta|-\sqrt3\,|M||e|\bigr)^2\ge0 . \qedhere
\]
\end{proof}

\begin{remark}
The two mechanisms which simplify the linear case are the following. First, the cubic
term satisfies $D^3g_{\mathrm{lin}}(0)[X,M_k,M_k]=0$ for all $X$ and $k$ automatically, since two of
the three linear functionals $e_{11}$, $e_{22}$, $e_{13}+e_{23}$ vanish on each $M_k$; in the nonlinear case the
corresponding condition is a nontrivial constraint, which is satisfied by adding a cubic
correction, see Remark~\ref{rem:fromlinear}. Second, the convex quartic term $\frac34|e|^4$ controls the cubic
term both near the modes and at infinity, since $q$ vanishes on $\{a\odot b\}$ only to
second order at the modes and the cubic term is at most linear in $e$ in the second
variation. The degree four is the smallest possible for a polynomial example: if a polynomial $f$ of
degree three is rank-one convex, then its cubic part $f_c$ satisfies $D^3f_c[X,M,M]=0$ for all $X$
and all rank-one $M$ (since $D^2f(F)[M,M]$ is affine in $F$ and nonnegative); thus $f_c$ is
rank-one affine, hence a null Lagrangian, and $f$ is quasiconvex, because its quadratic part is
nonnegative on rank-one matrices and therefore quasiconvex (by Plancherel's theorem).
\end{remark}

\section{The local function}\label{sec:local}

With the quadratic form $q$ of \eqref{eq:q}, for $e\in\R^{3\times3}_{\mathrm{sym}}$ let
\begin{equation}\label{eq:g}
g(e):=2e_{33}+q(e)+e_{11}e_{22}\,(e_{11}+e_{22}+e_{13}+e_{23})
+(e_{13}+e_{23})^2(e_{11}+e_{22}-4e_{12})+3|e|^4 .
\end{equation}
We set
\begin{equation}\label{eq:w}
W_0(F):=g\bigl(\tfrac12(F^TF-\Id)\bigr),\qquad F\in\Rnn .
\end{equation}
$W_0$ is a polynomial of degree $8$, $W_0(\Id)=0$, and $W_0(QF)=W_0(F)$ for all
$Q\in\SO$ (indeed for all $Q\in\mathrm{O}(3)$). The linear term $2e_{33}$ of $g$ is the
prestress $|Fe_3|^2-1$, a convex function of $F$.

\begin{lemma}[Second variation at the identity]\label{lemma:Q}
Let $Q(a,b):=\frac12D^2W_0(\Id)[a\otimes b,a\otimes b]$. Then
\[
Q(a,b)=(a_1b_2)^2+(a_2b_1)^2+\bigl((a_1-a_2)b_3+a_3(b_1-b_2)\bigr)^2+|a|^2b_3^2 .
\]
In particular $Q\ge0$, and for $a,b\ne0$ one has $Q(a,b)=0$ if and only if
$a\otimes b$ is a multiple of one of
\[
R_1=e_1\otimes e_1,\qquad R_2=e_2\otimes e_2,\qquad R_3=e_3\otimes(e_1+e_2).
\]
\end{lemma}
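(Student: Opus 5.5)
We plan to expand $W_0(\Id+tH)$ to second order in $t$, with $H=a\otimes b$. Put $e(t)=\tfrac12((\Id+tH)^T(\Id+tH)-\Id)=t\operatorname{sym}H+\tfrac{t^2}{2}H^TH$. The cubic and quartic terms of $g$ are $O(|e|^3)=O(t^3)$, so they do not contribute to $D^2W_0(\Id)$. Only the linear term $2e_{33}$ and the quadratic form $q$ matter. For these,
\[
W_0(\Id+tH)=2e_{33}(t)+q(e(t))+O(t^3)=t\,2H_{33}+t^2\bigl((H^TH)_{33}+q(\operatorname{sym}H)\bigr)+O(t^3).
\]
This gives $Q(a,b)=(H^TH)_{33}+q(a\odot b)$. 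Since $H^TH=|a|^2\,b\otimes b$, we have $(H^TH)_{33}=|a|^2b_3^2$; this is the prestress contribution. The remaining term $q(a\odot b)$ is exactly $|\beta|^2$ by \eqref{eq:qB} in the proof of Lemma~\ref{lemma:linrc}. Together these give the stated formula, and $Q\ge0$ follows at once.

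For the characterization of the zero set we take $a,b\neq0$ and $Q(a,b)=0$. Then $b_3=0$, $a_1b_2=0$, $a_2b_1=0$, and the last square reduces to $a_3(b_1-b_2)=0$. We split into cases on $a_3$.

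\emph{Case $a_3\neq0$.} Then $b_1=b_2$, and $b\ne0$ forces $b_1\ne0$. From $a_1b_2=a_2b_1=0$ we get $a_1=a_2=0$. Hence $a\parallel e_3$ and $b\parallel e_1+e_2$, which is $R_3$.

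\emph{Case $a_3=0$.} Then $(a_1,a_2)\neq0$ and $b=(b_1,b_2,0)\ne0$. If $a_1\ne0$, then $b_2=0$, so $b_1\ne0$; then $a_2=0$, giving $R_1$. If $a_1=0$, then $a_2\ne0$, so $b_1=0$, giving $R_2$.

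For the converse we check directly that $Q$ vanishes at $(e_1,e_1)$, $(e_2,e_2)$ and $(e_3,e_1+e_2)$. The scaling $Q(\lambda a,\mu b)=\lambda^2\mu^2Q(a,b)$ then covers all multiples of these three matrices.

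The only point needing care is the expansion of $e(t)$. One must keep the $t^2$ term $\tfrac12 H^TH$ inside the linear part $2e_{33}$, and must also check that in $q(e(t))$ only the $t\operatorname{sym}H$ part of $e(t)$ matters at order $t^2$. The rest is the bookkeeping already carried out in \eqref{eq:qB}.
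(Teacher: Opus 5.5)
Your proposal is correct and follows the paper's proof essentially step by step. It uses the same expansion $\frac12\bigl((\Id+tH)^T(\Id+tH)-\Id\bigr)=t\,a\odot b+\frac{t^2}{2}|a|^2\,b\otimes b$, the same observation that only $2e_{33}$ and $q$ contribute at order $t^2$ (giving $|a|^2b_3^2+q(a\odot b)$), the same appeal to \eqref{eq:qB}, and the same case split on $a_3$ to identify the zero set.
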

\begin{proof}
With $A=a\otimes b$ one has
$\frac12\bigl((\Id+tA)^T(\Id+tA)-\Id\bigr)=t\,a\odot b+\frac{t^2}2|a|^2\,b\otimes b$, and $Q(a,b)$
is the coefficient of $t^2$ in $g$ evaluated at this matrix, which only involves the linear
and the quadratic part of $g$: it equals $|a|^2b_3^2+q(a\odot b)$, and \eqref{eq:qB}
gives the formula.

Let $a,b\ne0$ with $Q(a,b)=0$. Then $b_3=0$, and therefore $a_1b_2=a_2b_1=a_3(b_1-b_2)=0$.
If $a_3\ne0$, then $b_1=b_2=:\beta\ne0$, hence $a_1=a_2=0$ and
$a\otimes b=a_3\beta R_3$. If $a_3=0$ and $a_1\ne0$, then $b_2=0$, so $b_1\ne0$,
$a_2=0$, and $a\otimes b=a_1b_1R_1$. If $a_3=a_1=0$, then $a_2\ne0$, $b_1=0$, and
$a\otimes b=a_2b_2R_2$. Conversely $Q$ vanishes on the $R_i$.
\end{proof}

\begin{lemma}[Expansion on \v{S}ver\'ak's plane]\label{lemma:L}
Let $L:=\operatorname{span}\{R_1,R_2,R_3\}=\left\{\begin{pmatrix} r&0&0\\0&s&0\\u&u&0\end{pmatrix}\right\}$.
For $H\in L$,
\[
W_0(\Id+\varepsilon H)=-2\varepsilon^2rs+\varepsilon^3rsu+\varepsilon^4p(\varepsilon,r,s,u)
\]
for a polynomial $p$.
\end{lemma}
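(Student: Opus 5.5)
The plan is a direct computation: expand the strain $e=\tfrac12\bigl((\Id+\varepsilon H)^T(\Id+\varepsilon H)-\Id\bigr)=\varepsilon\,\operatorname{sym}H+\tfrac{\varepsilon^2}2H^TH$ for $H\in L$, insert it into \eqref{eq:g}, and collect powers of $\varepsilon$. Since $g$ is a polynomial and each entry of $e$ is a polynomial in $(\varepsilon,r,s,u)$ divisible by $\varepsilon$, $W_0(\Id+\varepsilon H)$ is a polynomial in $(\varepsilon,r,s,u)$ with no constant or linear term in $\varepsilon$. So it suffices to identify the coefficients of $\varepsilon^2$ and $\varepsilon^3$. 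The remainder is then automatically $\varepsilon^4p$ with $p$ a polynomial.

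\emph{Step 1: the strain.} The columns of $H$ are $(r,0,u)$, $(0,s,u)$ and $0$. Hence $H^TH$ has entries $(1,1)=r^2+u^2$, $(2,2)=s^2+u^2$ and $(1,2)=u^2$, and all other entries vanish. Together with $\operatorname{sym}H$ this gives
\[
e_{11}=\varepsilon r+\tfrac{\varepsilon^2}2(r^2+u^2),\quad e_{22}=\varepsilon s+\tfrac{\varepsilon^2}2(s^2+u^2),\quad e_{12}=\tfrac{\varepsilon^2}2u^2,\quad e_{13}=e_{23}=\tfrac{\varepsilon}2u,\quad e_{33}=0 .
\]

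\emph{Step 2: term by term.} Go through \eqref{eq:g}.
\begin{itemize}
\item The prestress $2e_{33}$ vanishes.
\item In $q$, the terms $4e_{12}^2$ and $4(e_{13}-e_{23})^2$ are $O(\varepsilon^4)$ and $0$, respectively.
\item The term $-2e_{11}e_{22}$ contributes $-2\varepsilon^2rs$ and, at order $\varepsilon^3$, the amount $-\bigl(r(s^2+u^2)+s(r^2+u^2)\bigr)$.
\item The cubic term $e_{11}e_{22}(e_{11}+e_{22}+e_{13}+e_{23})$ contributes $\varepsilon^3rs(r+s+u)$.
\item The correction $(e_{13}+e_{23})^2(e_{11}+e_{22}-4e_{12})$ contributes $\varepsilon^3u^2(r+s)$.
\item The quartic term $3|e|^4$ is $O(\varepsilon^4)$.
\end{itemize}

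\emph{Step 3: cancellation at third order.} Summing the $\varepsilon^3$ coefficients gives
\[
-rs^2-ru^2-r^2s-su^2+r^2s+rs^2+rsu+u^2r+u^2s=rsu .
\]
This yields the claimed expansion.

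The only step that needs care is this third-order cancellation. It is exactly the role of the cubic correction: the nonlinear part $\tfrac{\varepsilon^2}2H^TH$ of the strain, fed into the quadratic term $-2e_{11}e_{22}$, produces spurious cubic terms. These are cancelled by the extra terms $e_{11}e_{22}(e_{11}+e_{22})$ and $(e_{13}+e_{23})^2(e_{11}+e_{22})$ in $g$. I would double-check the bookkeeping of $H^TH$ there, since the diagonal entries are where the cancellation happens.
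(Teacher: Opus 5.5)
Your proposal is correct and follows the paper's proof essentially line by line: you compute the same strain entries $e_{11},e_{22},e_{12}$, $e_{13}=e_{23}=\frac{\varepsilon}{2}u$ and $e_{33}=0$, expand $q$, the two cubic terms and $3|e|^4$ up to order $\varepsilon^3$, and find the same cancellation leaving $rsu$. Your added observation that every entry of $e$ is divisible by $\varepsilon$ and that $e_{33}\equiv0$ exactly justifies that the remainder is $\varepsilon^4$ times a polynomial, a step the paper leaves implicit.
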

\begin{proof}
For $H\in L$,
$e=\frac12\bigl((\Id+\varepsilon H)^T(\Id+\varepsilon H)-\Id\bigr)$ has entries
$e_{11}=\varepsilon r+\frac{\varepsilon^2}2(r^2+u^2)$,
$e_{22}=\varepsilon s+\frac{\varepsilon^2}2(s^2+u^2)$,
$e_{12}=\frac{\varepsilon^2}2u^2$, $e_{13}=e_{23}=\frac{\varepsilon}2u$, $e_{33}=0$.
Hence, up to terms of order $\varepsilon^4$, $q(e)=-2e_{11}e_{22}+4e_{12}^2$ equals
$-2\varepsilon^2rs-\varepsilon^3\bigl(r(s^2+u^2)+s(r^2+u^2)\bigr)$, the two cubic terms of $g$
equal $\varepsilon^3rs(r+s+u)$ and $\varepsilon^3u^2(r+s)$, and $|e|^4=O(\varepsilon^4)$.
Summing up gives the claim.
\end{proof}

\subsection*{The reduced second variation}
For symmetric $e,X$ let $L_e(X):=Dg(e)[X]$ and $Q_e(X):=\frac12D^2g(e)[X,X]$, so that
$g(e+X)=g(e)+L_e(X)+Q_e(X)+O(|X|^3)$. For $G,A\in\Rnn$,
\[
\tfrac12\bigl((G+tA)^T(G+tA)-\Id\bigr)=e+tS+t^2K,\qquad e:=\tfrac12(G^TG-\Id),\quad
S:=\operatorname{sym}(G^TA),\quad K:=\tfrac12A^TA,
\]
so that
\begin{equation}\label{eq:D2W0}
D^2W_0(G)[A,A]=\frac{d^2}{dt^2}\Big|_{t=0}W_0(G+tA)=2Q_e(S)+2L_e(K).
\end{equation}
For $A=a\otimes b$ and $c:=G^Ta$ one has $S=c\odot b$ and
$K=\frac12|a|^2\,b\otimes b$, and, if $G$ is invertible,
$|a|^2=c^T(G^TG)^{-1}c=c^T\cof(\Id+2e)\,c/\det(\Id+2e)$ by \eqref{eq:cof} (recall that the
cofactor matrix of a symmetric matrix is symmetric). This suggests to define,
for $e\in\R^{3\times3}_{\mathrm{sym}}$ and $c,b\in\R^3$, the polynomial
\begin{equation}\label{eq:P}
P(e,c,b):=2\det(\Id+2e)\,Q_e(c\odot b)+\bigl(c^T\cof(\Id+2e)\,c\bigr)\,L_e(b\otimes b).
\end{equation}

\begin{lemma}[Reduction]\label{lemma:red}
For all $G\in\Rnn$ and $a,b\in\R^3$,
\begin{equation}\label{eq:red}
\det(G^TG)\,D^2W_0(G)[a\otimes b,a\otimes b]=P\bigl(\tfrac12(G^TG-\Id),\,G^Ta,\,b\bigr).
\end{equation}
Moreover $P(e,\lambda c,\mu b)=\lambda^2\mu^2P(e,c,b)$ and $P(0,c,b)=2Q(c,b)$.
\end{lemma}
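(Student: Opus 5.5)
The identity \eqref{eq:red} comes from multiplying \eqref{eq:D2W0} by $\det(G^TG)$ and rewriting each term in the invariants. Fix $G$, $a$, $b$, and put $e:=\frac12(G^TG-\Id)$, $c:=G^Ta$, $A=a\otimes b$. Then $\Id+2e=G^TG$, so $\det(\Id+2e)=\det(G^TG)$. By \eqref{eq:D2W0},
\[
D^2W_0(G)[A,A]=2Q_e(c\odot b)+2L_e\bigl(\tfrac12|a|^2\,b\otimes b\bigr)=2Q_e(c\odot b)+|a|^2L_e(b\otimes b),
\]
using $S=\operatorname{sym}(G^Ta\otimes b)=c\odot b$, $K=\frac12|a|^2b\otimes b$, and the linearity of $L_e$. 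After multiplication by $\det(G^TG)$, the first term is exactly the first summand of $P$.

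For the second term I must show
\[
\det(G^TG)\,|a|^2=c^T\cof(G^TG)\,c .
\]
By \eqref{eq:cof}, $\cof(G^TG)=\cof(G^T)\cof G=(\cof G)^T\cof G$, so the right-hand side is $|\cof G\,G^Ta|^2$. Also by \eqref{eq:cof}, $(\cof G)G^T=(\det G)\Id$, hence $\cof G\,G^Ta=(\det G)a$. The right-hand side therefore equals $(\det G)^2|a|^2=\det(G^TG)|a|^2$.

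This argument holds for every $G$, invertible or not. So I do not need a density argument, which would have been the only delicate point. (If one preferred to follow the invertible-case formula given before the lemma, one could instead note that both sides of \eqref{eq:red} are polynomials in $G$ and conclude by density of invertible matrices; the cofactor identity avoids this.)

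The homogeneity follows from the structure of \eqref{eq:P}. $Q_e$ is a quadratic form, and $(\lambda c)\odot(\mu b)=\lambda\mu\,c\odot b$, so the first summand scales by $\lambda^2\mu^2$. In the second summand, $c^T\cof(\Id+2e)c$ is quadratic in $c$ and $L_e(b\otimes b)$ is quadratic in $b$, so it also scales by $\lambda^2\mu^2$.

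Finally, set $e=0$. Then $\det\Id=1$ and $\cof\Id=\Id$, so
\[
P(0,c,b)=2Q_0(c\odot b)+|c|^2L_0(b\otimes b).
\]
Taking $G=\Id$ in the identity already proved, with $a=c$, gives $P(0,c,b)=D^2W_0(\Id)[c\otimes b,c\otimes b]=2Q(c,b)$ by the definition of $Q$ in Lemma~\ref{lemma:Q}. As a consistency check, $Q_0(X)=q(X)$ and $L_0(X)=2X_{33}$ from \eqref{eq:g}, so $P(0,c,b)=2q(c\odot b)+2|c|^2b_3^2$. This matches twice the formula of Lemma~\ref{lemma:Q} via \eqref{eq:qB}.
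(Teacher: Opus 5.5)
Your proposal is correct and follows the paper's proof essentially step for step. You derive the identity $c^T\cof(G^TG)\,c=|(\cof G)G^Ta|^2=\det(G^TG)|a|^2$ from \eqref{eq:cof}, valid for every $G$ with no density argument, and insert it into \eqref{eq:D2W0}; homogeneity follows from the quadratic/linear structure of $Q_e$ and $L_e$, and $P(0,c,b)=2Q(c,b)$ is \eqref{eq:D2W0} at $G=\Id$, $a=c$.
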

\begin{proof}
By \eqref{eq:cof}, $\cof(G^TG)=(\cof G)^T\cof G$ and $(\cof G)\,G^T=(\det G)\Id$, hence
\[
(G^Ta)^T\cof(G^TG)\,G^Ta=a^T\bigl((\cof G)\,G^T\bigr)^T\bigl((\cof G)\,G^T\bigr)a
=\det(G^TG)|a|^2
\]
for every $G$. Inserting this, $S=c\odot b$ and
$K=\frac12|a|^2b\otimes b$ into \eqref{eq:D2W0} gives \eqref{eq:red}. The homogeneity is clear
since $Q_e$ is quadratic and $L_e$ is linear, and $P(0,c,b)=2Q_0(c\odot b)+|c|^2L_0(b\otimes b)$
is \eqref{eq:D2W0} at $G=\Id$, $a=c$.
\end{proof}

Frame indifference has been absorbed into the variables $(e,c)$: the rotational
degeneracy of $D^2W_0$ along $\SO$ is no longer present in $P$, and the only
remaining degeneracy of the zero set of $P(0,\cdot,\cdot)$ is the scaling in Lemma~\ref{lemma:red}.
We remove it by normalizing one component of $c$ and of $b$.

We also use a discrete symmetry. Let $\Pi$ be the permutation matrix exchanging $e_1$ and
$e_2$. By inspection of \eqref{eq:g}, $g(\Pi e\Pi)=g(e)$, hence $W_0(F\Pi)=W_0(F)$, and
$L_{\Pi e\Pi}(\Pi X\Pi)=L_e(X)$, $Q_{\Pi e\Pi}(\Pi X\Pi)=Q_e(X)$. Since moreover
$\det(\Id+2\Pi e\Pi)=\det(\Id+2e)$, $\cof(\Id+2\Pi e\Pi)=\Pi\cof(\Id+2e)\Pi$ and
$(\Pi c)\odot(\Pi b)=\Pi(c\odot b)\Pi$,
\begin{equation}\label{eq:sym}
P(\Pi e\Pi,\Pi c,\Pi b)=P(e,c,b).
\end{equation}
The symmetry exchanges the modes $R_1$ and $R_2$ and fixes $R_3$; so it suffices to
analyse $R_1$ and $R_3$.
For $y=(y_1,\dots,y_{10})\in\R^{10}$ let $E(y)$ be the symmetric matrix with
$(E_{11},E_{12},E_{13},E_{22},E_{23},E_{33})=(y_1,\dots,y_6)$, and
\begin{equation}\label{eq:charts}
\begin{aligned}
&c^{(1)}(y)=(1,y_7,y_8), && b^{(1)}(y)=(1,y_9,y_{10}),\\
&c^{(3)}(y)=(y_7,y_8,1), && b^{(3)}(y)=(1,1+y_9,y_{10}),
\end{aligned}
\end{equation}
and $P_k(y):=P\bigl(E(y),c^{(k)}(y),b^{(k)}(y)\bigr)$ for $k\in\{1,3\}$.
At $y=0$, $c^{(k)}\otimes b^{(k)}=R_k$.

\begin{lemma}[Nondegeneracy at the modes]\label{lemma:hess}
For $k\in\{1,3\}$ there are a polynomial $r_k(t,y)$ and a symmetric rational matrix
$A_k\in\R^{10\times10}$ such that
\[
P_k(ty)=t^2\,y^TA_ky+t^3r_k(t,y)\qquad\text{for all }t\in\R,\ y\in\R^{10},
\]
and $A_k-\frac14\Id$ is positive definite.
\end{lemma}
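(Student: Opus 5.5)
The plan is to Taylor expand $P_k(y)$ at $y=0$ and use only the structure of $P$. Since $P_k$ is a polynomial in $y$, we can write $P_k(ty)=\sum_{j\ge0}t^j h_j(y)$ with $h_j$ homogeneous of degree $j$. So the claim amounts to three facts. First, $h_0=P_k(0)=0$. Second, $h_1=\nabla P_k(0)=0$. Third, $h_2(y)=y^TA_ky$ with $A_k-\frac14\Id$ positive definite; the remainder $r_k$ then collects all terms with $j\ge3$ and is automatically a polynomial.

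For the first two facts:
\begin{itemize}
\item At $y=0$ we have $E=0$ and $c^{(k)}\otimes b^{(k)}=R_k$. By Lemma~\ref{lemma:red}, $P_k(0)=2Q(c^{(k)}(0),b^{(k)}(0))$, which vanishes by Lemma~\ref{lemma:Q}.
\item The derivatives in the $(c,b)$ directions, i.e.\ $y_7,\dots,y_{10}$, vanish because $(c,b)\mapsto P(0,c,b)=2Q(c,b)\ge0$ attains its minimum $0$ there.
\item The derivatives in the $e$ directions, $y_1,\dots,y_6$, need a computation. $\partial_eP(0,c,b)[X]$ collects three contributions: $2\operatorname{tr}(2X)Q_0(c\odot b)$, which vanishes at the mode; $2\cdot\frac12D^3g(0)[X,c\odot b,c\odot b]$ from the cubic part of $g$; and $|c|^2D^2g(0)[X,b\otimes b]+c^T\cof'(0)[2X]c\,L_0(b\otimes b)$. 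For $R_1,R_3$ the vector $b$ is $e_1$ or $e_1+e_2$, so $L_0(b\otimes b)=2(b_3)^2=0$.
\item The remaining condition, $D^3g(0)[X,M,M]+|c|^2D^2g(0)[X,b\otimes b]=0$ with $M=c\odot b$, is exactly what the cubic correction in \eqref{eq:g} is designed for (cf.\ Remark~\ref{rem:fromlinear}). I would verify it directly by evaluating the cubic and quadratic parts of $g$ on $M_1=e_1\otimes e_1$, $b\otimes b=e_1\otimes e_1$ for $k=1$, and on $e_3\odot(e_1+e_2)$, $(e_1+e_2)\otimes(e_1+e_2)$ for $k=3$.
\end{itemize}

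For the third fact, $A_k$ is half the Hessian of $P_k$ at $0$, and it is rational because $P$ has rational coefficients. The Hessian splits into blocks.
\begin{itemize}
\item The $(c,b)$ block is the Hessian of $2Q$ restricted to the chart. It is positive definite on $y_7,\dots,y_{10}$ because the zero $R_k$ of $Q$ is nondegenerate transversally to scaling (Lemma~\ref{lemma:Q} gives a quadratic vanishing structure), and the chart \eqref{eq:charts} fixes the scaling.
\item The mixed $(e,(c,b))$ block comes from the same cubic and trilinear terms as above.
\item The $(e,e)$ block comes from the quartic term $3|e|^4$ through $D^4g(0)[X,X,M,M]$, together with $D^2$ in $e$ of $\det(\Id+2e)$ and of $\cof$ times lower-order terms, and from $c^T\cof(\Id+2e)c\,L_e(b\otimes b)$ via $D^3g(0)[X,X,b\otimes b]$.
\end{itemize}
The enlarged quartic coefficient $3$ (versus $\frac34$ in the linear case) is what makes the $e$ block dominate the mixed terms, as in Step~4 of Lemma~\ref{lemma:linrc}.

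The main obstacle is the last step: showing that the explicit $10\times10$ matrix minus $\frac14\Id$ is positive definite. I would compute $A_k$ symbolically; by \eqref{eq:sym} only $k=1,3$ are needed. I would then certify positive definiteness by an exact $LDL^T$ (Cholesky) decomposition over $\mathbb{Q}$, checking that all pivots are positive, or equivalently by Sylvester's criterion on the leading principal minors. The main effort is bookkeeping in this rational computation, which is the part suited to computer algebra or the Lean verification.
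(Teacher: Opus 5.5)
Your proposal is correct and takes the same route as the paper. The paper also expands $P_k(ty)$ by computer algebra, observes that the $t^0$ and $t^1$ terms vanish (your structural explanation of this is exactly the content of the remark following the lemma), writes down $A_k$ explicitly, and certifies positive definiteness by checking that the leading principal minors of the integer matrix $4A_k-\Id$ are positive.

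One small caveat: Lemma~\ref{lemma:Q} only gives positive \emph{semi}definiteness of the $(y_7,\dots,y_{10})$ block, as the paper itself notes. Definiteness comes from the explicit quadratic part, e.g.\ $2y_7^2+2y_9^2+2(y_8+y_{10})^2+2y_{10}^2$ for $k=1$. This step is not load-bearing in your argument, since you check the full matrix anyway.
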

\begin{proof}
The expansion is an identity between explicit polynomials, which can easily be checked with a computer algebra system (we used Mathematica). One sees that the terms of order
$t^0$ and $t^1$ in $P_k(ty)$ vanish, and computes the matrices $A_k$ explicitly. Positive definiteness of $A_k-\frac14\Id$ is
checked in integer arithmetic, by verifying that the leading principal minors of the integer matrix $4A_k-\Id$ are positive.

For ease of verification we give here the two matrices.
{\scriptsize\[A_1=\begin{pmatrix}
36 & 0 & 0 & 3 & 0 & 0 & 0 & 0 & 0 & 0\\
0 & 24 & 0 & 0 & 0 & 0 & 0 & 0 & 4 & 0\\
0 & 0 & 25 & \frac12 & 1 & 0 & 0 & 1 & 0 & 5\\
3 & 0 & \frac12 & 13 & \frac12 & 0 & 0 & \frac12 & 0 & \frac12\\
0 & 0 & 1 & \frac12 & 25 & 0 & 0 & 1 & 0 & -3\\
0 & 0 & 0 & 0 & 0 & 12 & 0 & 0 & 0 & 0\\
0 & 0 & 0 & 0 & 0 & 0 & 2 & 0 & 0 & 0\\
0 & 0 & 1 & \frac12 & 1 & 0 & 0 & 2 & 0 & 2\\
0 & 4 & 0 & 0 & 0 & 0 & 0 & 0 & 2 & 0\\
0 & 0 & 5 & \frac12 & -3 & 0 & 0 & 2 & 0 & 4
\end{pmatrix},\qquad
A_3=\begin{pmatrix}
13 & 0 & \frac12 & 2 & \frac12 & 2 & 0 & 1 & -1 & 0\\
0 & 24 & 0 & 0 & 0 & -8 & 0 & 0 & 0 & 0\\
\frac12 & 0 & 46 & \frac12 & 22 & 0 & -2 & -2 & 0 & 0\\
2 & 0 & \frac12 & 13 & \frac12 & 2 & 1 & 0 & 1 & 0\\
\frac12 & 0 & 22 & \frac12 & 46 & 0 & -2 & -2 & 0 & 0\\
2 & -8 & 0 & 2 & 0 & 12 & 0 & 0 & 0 & 0\\
0 & 0 & -2 & 1 & -2 & 0 & 2 & 0 & 0 & 0\\
1 & 0 & -2 & 0 & -2 & 0 & 0 & 2 & 0 & 0\\
-1 & 0 & 0 & 1 & 0 & 0 & 0 & 0 & 2 & 0\\
0 & 0 & 0 & 0 & 0 & 0 & 0 & 0 & 0 & 2
\end{pmatrix}.\]}
\end{proof}

\begin{remark}
The vanishing of the $t^0$ term and of the $t^1$ term in the directions $y_7,\dots,y_{10}$
is automatic, since $P(0,c,b)=2Q(c,b)\ge0$ vanishes at $(c,b)=(c^{(k)}(0),b^{(k)}(0))$.
The genuinely nontrivial conditions are the vanishing of the $t^1$ term in the
directions $y_1,\dots,y_6$, i.e.\ $D^3W_0(\Id)[R_k,R_k,X]=0$ for symmetric $X$, and the
positivity of $A_k$.

The first condition is what the cubic terms of $g$ other than
$e_{11}e_{22}(e_{13}+e_{23})$ are needed for. Write $C$ for the cubic part
of $g$, and note that $c^{(k)}(0)\odot b^{(k)}(0)=\operatorname{sym}R_k=M_k$, with $M_k$ as in
\eqref{eq:Mk}. Since $q(M_k)=0$ and $b^{(k)}_3(0)=0$, the part of
$P(e,c^{(k)}(0),b^{(k)}(0))$ which is linear in $e$ is
$D^2C(e)[M_k,M_k]+2|c^{(k)}(0)|^2\,q(e,b^{(k)}(0)\otimes b^{(k)}(0))$, with $q(\cdot,\cdot)$
the polar form of $q$; the second term comes from the geometric nonlinearity $\frac12|a|^2b\otimes b$
in \eqref{eq:D2W0}. For $k=1$ the two terms are
$\partial^2_{e_{11}}C=2e_{22}$ and $2q(e,e_1\otimes e_1)=-2e_{22}$; for $k=3$ only
$e_{13}+e_{23}$ changes along $M_3$, at unit speed, so the first term is $2(e_{11}+e_{22}-4e_{12})$,
while $2q(e,(e_1+e_2)\otimes(e_1+e_2))=2(4e_{12}-e_{11}-e_{22})$.

The second condition is where the quartic term enters. The block of $A_k$ corresponding to the
directions $y_7,\dots,y_{10}$ is the matrix of the quadratic part of $2Q(c^{(k)}(y),b^{(k)}(y))$,
which is positive definite (Lemma~\ref{lemma:Q} proved the weaker condition that it is positive semidefinite). 
The block corresponding to $y_1,\dots,y_6$, instead, 
contains the positive definite contribution of the quartic term. Hence $A_k$ is positive definite 
if the coefficient of the quartic term is large enough. The fact that 3 suffices is checked by the positive-definiteness of the matrices above.
\end{remark}

\begin{remark}[Relation with the geometrically linear example]\label{rem:fromlinear}
The quadratic form $q$ is the one of the geometrically linear example of
Section~\ref{sec:linear}, and
\[
g(e)=g_{\mathrm{lin}}(e)+2e_{33}+e_{11}e_{22}(e_{11}+e_{22})+(e_{13}+e_{23})^2(e_{11}+e_{22}-4e_{12})+\tfrac94|e|^4,
\]
with $g_{\mathrm{lin}}$ the density of Theorem~\ref{thm:linear}. Thus $W_0$ is the linear example
evaluated at the strain $e=\frac12(F^TF-\Id)$, plus three modifications:
\emph{(i)} the prestress $2e_{33}=|Fe_3|^2-1$, which adds $|a|^2b_3^2$ to $Q$ and thereby removes the
zero $(e_1+e_2)\otimes e_3=R_3^T$ of $q(a\odot b)$. Some prestress is necessary: if $DW_0(\Id)=0$,
then $Q(a,b)$ depends only on $a\odot b$, so that $R_3$ and $R_3^T$ are zeros at the same time;
the first-order conditions at both then force $(e_1+e_2)\otimes(e_1+e_2)-2e_3\otimes e_3$ into the
kernel of $D^2g(0)$, which is not the case if the quadratic part of $g$ is $q$.
\emph{(ii)} The cubic correction compensates the geometric nonlinearity at the modes, as explained in
the previous remark. \emph{(iii)} The quartic term is enlarged, so that $A_k$ is positive definite.
\end{remark}

\section{Rank-one convexity near \texorpdfstring{$\SO$}{SO(3)}}\label{sec:rc}

\begin{lemma}[Nondegenerate minima]\label{lemma:MB}
Let $\Theta$,  $\alpha:\R^N\to\R$,
$\beta\in C^0(\R\times \R^N)$, $m>0$
be such that for all $t\in\R$, $y\in\R^N$ one has
$\Theta(ty)=t^2\alpha(y)+t^3\beta(t,y)$,
and $\alpha(y)\ge m|y|^2$.
% and some $\beta\in C^0(\R\times \R^N)$.
Then $\Theta\ge0$ in a
neighbourhood of $0$.
\end{lemma}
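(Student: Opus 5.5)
We prove the lemma by restricting to the unit sphere and using compactness to bound the remainder $\beta$ uniformly. Every $x\in\R^N$ can be written as $x=ty$ with $t=|x|\ge0$ and $|y|=1$; for $x=0$ take any unit $y$ and $t=0$. The hypothesis with $t=0$ gives $\Theta(0)=0$. For $x\neq0$ we then have
\[
\Theta(x)=t^2\alpha(y)+t^3\beta(t,y)\ge t^2\bigl(m-t\,\beta(t,y)\bigr),
\]
using $\alpha(y)\ge m|y|^2=m$. So it suffices to control $|\beta(t,y)|$ uniformly for $t$ small and $y$ on the unit sphere.

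By continuity of $\beta$ on $\R\times\R^N$ and compactness of $K:=[0,1]\times S^{N-1}$, the constant $C:=\max_{K}|\beta|$ is finite. Set $\delta:=\min\{1,m/(2C+1)\}$. For $0<|x|=t<\delta$ and $|y|=1$ we then have $t|\beta(t,y)|\le\delta C<m/2$. Hence $\Theta(x)\ge \tfrac m2|x|^2\ge0$ on the ball $B_\delta(0)$, which is the required neighbourhood.

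There is no real obstacle here; the only point needing care is that the decomposition $\Theta(ty)=t^2\alpha(y)+t^3\beta(t,y)$ is used only with $|y|=1$ and $t\in[0,1]$. This makes the compactness argument legitimate without any assumption on the growth of $\alpha$ or $\beta$. Note also that $\alpha$ need not be continuous, since only the lower bound $\alpha\ge m$ on the sphere is used. In the application (via Lemma~\ref{lemma:hess}) one takes $\Theta=P_k$, $\alpha(y)=y^TA_ky\ge\frac14|y|^2$ and $\beta=r_k$, which is a polynomial and hence continuous.
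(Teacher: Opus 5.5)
Your argument is correct and is essentially the paper's proof: write $x=\varrho u$ with $|u|=1$, bound $|\beta|$ by its maximum $C$ over the compact set $\{|t|\le1\}\times S^{N-1}$, and take the radius at most $\min\{1,m/(C+1)\}$. Your choice $m/(2C+1)$ even gives the quantitative bound $\Theta\ge\frac m2|x|^2$. One small slip: the displayed lower bound should read $t^2\bigl(m-t\,|\beta(t,y)|\bigr)$ rather than $t^2\bigl(m-t\,\beta(t,y)\bigr)$, and the absolute-value version is what you actually use afterwards.
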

\begin{proof}
Let $C:=\max\{|\beta(t,u)|:|t|\le1,\ |u|=1\}$. For $y=\varrho u$ with $|u|=1$ and
$0<\varrho\le\min\{1,m/(C+1)\}$ we get
$\Theta(y)=\varrho^2\bigl(\alpha(u)+\varrho\,\beta(\varrho,u)\bigr)\ge\varrho^2(m-\varrho C)\ge0$,
and $\Theta(0)=0$.
\end{proof}

\begin{proposition}[Local rank-one convexity]\label{prop:rc}
There is $\delta>0$ such that
$D^2W_0(F)[a\otimes b,a\otimes b]\ge0$ for all $F\in (\SO)_\delta$ and all $a,b\in\R^3$.
In particular, $t\mapsto W_0(F+ta\otimes b)$ is convex on every interval on
which $F+ta\otimes b\in (\SO)_\delta$.
\end{proposition}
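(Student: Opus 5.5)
By frame indifference and Lemma~\ref{lemma:red} it suffices to show that there is $\eta>0$ with $P(e,c,b)\ge0$ for all symmetric $e$ with $|e|<\eta$ and all $c,b\in\R^3$. Indeed, for $F\in(\SO)_\delta$ with $\delta$ small, $F$ is invertible and $e=\frac12(F^TF-\Id)$ is small. Moreover $\det(F^TF)>0$, so by \eqref{eq:red} the sign of $D^2W_0(F)[a\otimes b,a\otimes b]$ equals the sign of $P(e,F^Ta,b)$. The convexity statement on intervals then follows since the second derivative of $t\mapsto W_0(F+ta\otimes b)$ is nonnegative there.

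To prove nonnegativity of $P$ I argue by contradiction and compactness. Suppose there are $e_n\to0$ and $(c_n,b_n)$ with $P(e_n,c_n,b_n)<0$. By the homogeneity in Lemma~\ref{lemma:red} we may normalize $|c_n|=|b_n|=1$ and pass to a subsequence with $(c_n,b_n)\to(c,b)$. By continuity $P(0,c,b)=2Q(c,b)\le0$, so $Q(c,b)=0$. Lemma~\ref{lemma:Q} then says $c\otimes b$ is a nonzero multiple of some $R_k$. Using the symmetry \eqref{eq:sym} we reduce the case $k=2$ to $k=1$, by replacing $(e_n,c_n,b_n)$ with $(\Pi e_n\Pi,\Pi c_n,\Pi b_n)$. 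So only $k\in\{1,3\}$ remain.

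Next I rescale into the charts \eqref{eq:charts}. For $k=1$ we have $c=\pm e_1$ and $b=\pm e_1$ (up to a positive rescaling of $c\otimes b$). Hence $(c_n)_1\ne0$ and $(b_n)_1\ne0$ for large $n$, and by homogeneity (squares of scalars, so signs are harmless) $P(e_n,c_n,b_n)$ is a positive multiple of $P(e_n,c_n/(c_n)_1,b_n/(b_n)_1)=P_1(y_n)$. Here $y_n$ collects the entries of $e_n$ together with $(c_n)_2/(c_n)_1$, $(c_n)_3/(c_n)_1$, $(b_n)_2/(b_n)_1$ and $(b_n)_3/(b_n)_1$, and $y_n\to0$. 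For $k=3$ we have $c\parallel e_3$ and $b\parallel e_1+e_2$, so we normalize $(c_n)_3=1$ and $(b_n)_1=1$. Then $y_9=(b_n)_2/(b_n)_1-1\to0$, and again $y_n\to0$.

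Finally I apply Lemma~\ref{lemma:MB} with $\Theta=P_k$, $\alpha(y)=y^TA_ky$ and $\beta=r_k$. The lemma's hypotheses are exactly what Lemma~\ref{lemma:hess} provides, with $m=\frac14$. So $P_k\ge0$ in a neighbourhood of $0\in\R^{10}$, which contradicts $P_k(y_n)<0$ with $y_n\to0$. The main point needing care is the chart reduction: checking that every sequence converging to a mode does land in one of the two charts after rescaling and the symmetry $\Pi$. This works because each normalized component we divide by is nonzero at the limit mode.
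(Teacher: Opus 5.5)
Your proof is correct and follows the paper's argument step by step. It uses the reduction via Lemma~\ref{lemma:red} and homogeneity to unit vectors $(c,b)\in S^2\times S^2$, the identification of the limiting zeros with the modes $R_k$ via Lemma~\ref{lemma:Q}, the symmetry $\Pi$ to reduce $k=2$ to $k=1$, the charts \eqref{eq:charts}, and Lemmas~\ref{lemma:hess} and~\ref{lemma:MB} with $m=\frac14$. The only difference is how compactness enters: you argue by contradiction with a convergent subsequence, whereas the paper covers $\{0\}\times S^2\times S^2$ by finitely many neighbourhoods on which $P\ge0$.
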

\begin{proof}
\emph{Step 1: There is a neighbourhood $V$ of $0$ in $\R^{3\times3}_{\mathrm{sym}}$
such that $P(e,c,b)\ge0$ for all $e\in V$ and $c,b\in\R^3$.} By the homogeneity of
Lemma~\ref{lemma:red} it suffices to consider $(c,b)\in\Sigma:=S^2\times S^2$. Since $\Sigma$
is compact, it suffices to show that every $(c_0,b_0)\in\Sigma$ has a neighbourhood $O$ in
$\R^{3\times3}_{\mathrm{sym}}\times\Sigma$ of $(0,c_0,b_0)$ with $P\ge0$ on $O$.

If $P(0,c_0,b_0)>0$ this follows from continuity. Otherwise $Q(c_0,b_0)=0$ and by
Lemma~\ref{lemma:Q} $c_0\otimes b_0$ is a multiple of some $R_k$. Consider $k=1$, so
$c_0=\pm e_1$, $b_0=\pm e_1$. Near $(0,c_0,b_0)$ we have $c_1b_1\ne0$, and by homogeneity
\[
P(e,c,b)=c_1^2b_1^2\,P_1(y),\qquad y:=\bigl(e_{11},e_{12},e_{13},e_{22},e_{23},e_{33},\tfrac{c_2}{c_1},\tfrac{c_3}{c_1},\tfrac{b_2}{b_1},\tfrac{b_3}{b_1}\bigr),
\]
where $y$ depends continuously on $(e,c,b)$ and vanishes at $(0,c_0,b_0)$. By
Lemmas~\ref{lemma:hess} and~\ref{lemma:MB} (with $m=\frac14$), $P_1\ge0$ near $0$, and the
claim follows. The case $k=2$ reduces to $k=1$ by \eqref{eq:sym}, since
$(\Pi c_0,\Pi b_0)=(\pm e_1,\pm e_1)$ and $(e,c,b)\mapsto(\Pi e\Pi,\Pi c,\Pi b)$ is a linear
isomorphism. For $k=3$, $c_0=\pm e_3$, $b_0=\pm(e_1+e_2)/\sqrt2$; then
$c_3b_1\ne0$ near $(c_0,b_0)$ and $P(e,c,b)=c_3^2b_1^2P_3(y)$ with
$y=(e_{11},\dots,e_{33},\frac{c_1}{c_3},\frac{c_2}{c_3},\frac{b_2}{b_1}-1,\frac{b_3}{b_1})$.

\emph{Step 2.} The set $U:=\{F\in\Rnn:\frac12(F^TF-\Id)\in V,\ \det F\ne0\}$ is open and
contains the compact set $\SO$, hence $(\SO)_\delta\subset U$ for some $\delta>0$. For
$F\in U$, Lemma~\ref{lemma:red} and Step~1 give
$D^2W_0(F)[a\otimes b,a\otimes b]=P(\frac12(F^TF-\Id),F^Ta,b)/\det(F^TF)\ge0$. The last
assertion follows since $\frac{d^2}{dt^2}W_0(F+ta\otimes b)=D^2W_0(F+ta\otimes b)[a\otimes b,a\otimes b]$.
\end{proof}

\section{Failure of quasiconvexity at the identity}\label{sec:qc}

\begin{proposition}\label{prop:notqc}
Let $\varphi(x):=(\sin x_1,\ \sin x_2,\ -\sin(x_1+x_2))$, $x\in\R^3$. Then
$|\nabla\varphi|\le2$ and
\[
\avg_{(0,2\pi)^3}W_0(\Id+\varepsilon\nabla\varphi)\,dx=-\tfrac14\varepsilon^3+O(\varepsilon^4)
\qquad(\varepsilon\to0).
\]
In particular, for every $\rho>0$ there is $\varepsilon>0$ with
$\varepsilon\nabla\varphi\in B_\rho(0)$ pointwise and
$\avg W_0(\Id+\varepsilon\nabla\varphi)\,dx<0=W_0(\Id)$.
\end{proposition}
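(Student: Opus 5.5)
The plan is to reduce everything to Lemma~\ref{lemma:L}. The gradient is
\[
\nabla\varphi(x)=\begin{pmatrix}\cos x_1&0&0\\0&\cos x_2&0\\-\cos(x_1+x_2)&-\cos(x_1+x_2)&0\end{pmatrix},
\]
so $\nabla\varphi(x)\in L$ for every $x$, with
\[
r=\cos x_1,\qquad s=\cos x_2,\qquad u=-\cos(x_1+x_2).
\]
The bound on $\nabla\varphi$ follows directly from this form. Its Frobenius norm squared is $\cos^2x_1+\cos^2x_2+2\cos^2(x_1+x_2)\le4$, hence $|\nabla\varphi|\le2$.

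Next I apply Lemma~\ref{lemma:L} pointwise with $H=\nabla\varphi(x)$. This gives
\[
W_0(\Id+\varepsilon\nabla\varphi)=-2\varepsilon^2rs+\varepsilon^3rsu+\varepsilon^4p(\varepsilon,r,s,u).
\]
Since $p$ is a polynomial and $|r|,|s|,|u|\le1$, for $|\varepsilon|\le1$ the remainder satisfies $|p|\le C$ uniformly in $x$. Its average is therefore $O(\varepsilon^4)$.

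It remains to average the two leading terms over $Y=(0,2\pi)^3$, using the integrals already used in Lemma~\ref{lemma:linnotqc}. First, $\avg\cos x_1\cos x_2\,dx=0$ by independence of the two variables. Second,
\[
\avg\cos x_1\cos x_2\cos(x_1+x_2)\,dx=\tfrac14,
\]
which follows by expanding $\cos(x_1+x_2)=\cos x_1\cos x_2-\sin x_1\sin x_2$, giving $\avg\cos^2x_1\cos^2x_2=\frac14$, and noting that the other term averages to zero. Hence the $\varepsilon^3$ coefficient is $\avg rsu=-\frac14$, which gives $-\frac14\varepsilon^3+O(\varepsilon^4)$.

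For the last assertion, choose $\varepsilon>0$ small enough that $2\varepsilon<\rho$ and that $-\frac14\varepsilon^3+C\varepsilon^4<0$. Then $\varepsilon\nabla\varphi\in B_\rho(0)$ pointwise and $\avg W_0(\Id+\varepsilon\nabla\varphi)\,dx<0=W_0(\Id)$.

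The only point needing care is the uniformity of the $O(\varepsilon^4)$ remainder. It is immediate here because $p$ is a polynomial evaluated on a bounded set, so no step is a real obstacle. The main check is just the sign bookkeeping from $u=-\cos(x_1+x_2)$.
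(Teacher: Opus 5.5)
Your proof is correct and matches the paper's argument: you identify $\nabla\varphi(x)\in L$ with $r=\cos x_1$, $s=\cos x_2$, $u=-\cos(x_1+x_2)$, apply Lemma~\ref{lemma:L} pointwise, and use $\avg rs\,dx=0$ and $\avg rsu\,dx=-\frac14$. The only difference is that you spell out what the paper leaves implicit, namely the uniform bound on the polynomial remainder $p$ for $|\varepsilon|\le1$ and the explicit choice of a small $\varepsilon$ ($2\varepsilon<\rho$, $\varepsilon\le1$, $\varepsilon<1/(4C)$).
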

\begin{proof}
The computation is similar to the one in Lemma~\ref{lemma:linnotqc}.
In the notation of Lemma~\ref{lemma:L},
$\nabla\varphi(x)\in L$ with $r=\cos x_1$, $s=\cos x_2$, $u=-\cos(x_1+x_2)$, so
$|\nabla\varphi|^2=r^2+s^2+2u^2\le4$. By Lemma~\ref{lemma:L},
\[
\avg_{(0,2\pi)^3} W_0(\Id+\varepsilon\nabla\varphi)\,dx=-2\varepsilon^2\avg_{(0,2\pi)^3} rs\, dx
+\varepsilon^3\avg_{(0,2\pi)^3} rsu\, dx+O(\varepsilon^4),
\]
and $\avg\cos x_1\cos x_2\, dx=0$,
$\avg\cos x_1\cos x_2\cos(x_1+x_2)\, dx=\frac14$, so $\avg rsu\, dx=-\frac14$.
\end{proof}

By Lemma~\ref{lemma:periodic}, any continuous function which coincides with
$W_0$ on $B_\rho(\Id)$ for some $\rho>0$ is not quasiconvex (at $\Id$).

\begin{remark}
The second variation $\frac12D^2W_0(\Id)$ restricted to $L$ is $-2rs$, which is
indefinite on $L$ but vanishes on the three rank-one lines of $L$; the cubic
term $rsu$ couples the three modes since $(1,0)+(0,1)=(1,1)$ for their
wave vectors. This is the mechanism of \cite{Sverak92}. By Kristensen
\cite{Kristensen99} a $C^2$ function satisfying the \emph{strict}
Legendre--Hadamard condition at every point is locally quasiconvex; here the
Legendre--Hadamard condition at $\Id$ is degenerate exactly along the modes.
\end{remark}

\section{Globalization}\label{sec:global}

With $T$ as in \eqref{eq:Taffine}, for $\sigma>0$, $F\in\Rnn$ and
$(X,Y,d)\in\R^9\times\R^9\times\R$ let
\begin{equation}\label{eq:gamma}
\begin{gathered}
g_0(F):=|F-\cof F|^2+(\det F-1)^2,\qquad\gamma_\sigma(F):=g_0(F)+\sigma|F|^2,\\
\Gamma_\sigma(X,Y,d):=|X-Y|^2+(d-1)^2+\sigma|X|^2,\qquad\text{so that }\gamma_\sigma=\Gamma_\sigma\circ T.
\end{gathered}
\end{equation}
$\Gamma_\sigma$ is a convex quadratic polynomial on $\R^{19}$.

\begin{lemma}\label{lemma:glob}
Let $g_0$ and $\gamma_\sigma$ be as in \eqref{eq:gamma}.
\begin{enumerate}
\item $g_0\ge0$, and $g_0(F)=0$ if and only if $F\in\SO$.
\item For every $\sigma>0$, $\gamma_\sigma(QF)=\gamma_\sigma(F)$ for all $Q\in\SO$, $F\in\Rnn$;
$\gamma_\sigma=3\sigma$ on $\SO$; $\gamma_\sigma\ge g_0$ and $\gamma_\sigma(F)\ge\sigma|F|^2$.
\item For every $\sigma>0$, $F\in\Rnn$ and $a,b\in\R^3$, the function
$t\mapsto\gamma_\sigma(F+ta\otimes b)$ is a polynomial of degree at most two, with second
derivative at least $2\sigma|a\otimes b|^2$.
\item For every open set $U\supset\SO$ there is $\sigma>0$ such that
$\{\gamma_\sigma\le5\sigma\}\subset U$.
\item There is $C\ge0$ such that $D^2W_0(G)[A,A]\ge-C(1+|G|^2)^3|A|^2$ for all $G,A\in\Rnn$.
\end{enumerate}
\end{lemma}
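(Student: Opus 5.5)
We prove the five items in order. Each one is short, and the only step needing real care is (4).

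\emph{(1)} Clearly $g_0\ge0$. If $g_0(F)=0$, then $F=\cof F$ and $\det F=1$. By \eqref{eq:cof}, $F^T F=F^T\cof F=(\det F)\Id=\Id$, so $F\in\mathrm{O}(3)$, and $\det F=1$ gives $F\in\SO$. Conversely, $\cof Q=Q$ and $\det Q=1$ for $Q\in\SO$.

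\emph{(2)} Invariance follows from \eqref{eq:cofQ}, since $|QF-Q\cof F|=|F-\cof F|$ and $|QF|=|F|$. On $\SO$ we have $g_0=0$ and $|Q|^2=3$, so $\gamma_\sigma=3\sigma$. The two lower bounds are immediate because both summands are nonnegative.

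\emph{(3)} By \eqref{eq:Taffine}, $t\mapsto T(F+ta\otimes b)$ is affine with $X$-component $F+ta\otimes b$. Hence $\gamma_\sigma(F+ta\otimes b)=\Gamma_\sigma(T(F)+tV)$ is a quadratic polynomial in $t$, with second derivative $2\bigl(|V_X-V_Y|^2+V_d^2+\sigma|V_X|^2\bigr)\ge2\sigma|a\otimes b|^2$.

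\emph{(4)} We argue by contradiction. Suppose that for $\sigma_j=1/j$ there are $F_j\notin U$ with $\gamma_{\sigma_j}(F_j)\le5\sigma_j$. By (2), $\sigma_j|F_j|^2\le5\sigma_j$, so $|F_j|\le\sqrt5$ and the sequence is bounded. We also have $g_0(F_j)\le5/j\to0$. Passing to a subsequence, $F_j\to F$, and by continuity $g_0(F)=0$, so $F\in\SO\subset U$. Since $U$ is open, $F_j\in U$ for large $j$, which is a contradiction. This is the main point: the term $\sigma|F|^2$ gives the uniform bound $|F|^2\le5$ independently of $\sigma$, which supplies the compactness.

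\emph{(5)} By \eqref{eq:D2W0}, $D^2W_0(G)[A,A]=2Q_e(S)+2L_e(K)$ with $e=\frac12(G^TG-\Id)$, $S=\operatorname{sym}(G^TA)$ and $K=\frac12A^TA$. Since $g$ is a quartic polynomial, $|D^2g(e)|\le C(1+|e|^2)$ and $|Dg(e)|\le C(1+|e|^3)$. Combining these with $|e|\le C(1+|G|^2)$, $|S|\le|G||A|$ and $|K|\le\frac12|A|^2$ gives
\[
|D^2W_0(G)[A,A]|\le C\bigl((1+|G|^2)^2|G|^2+(1+|G|^2)^3\bigr)|A|^2\le C(1+|G|^2)^3|A|^2 ,
\]
which is in fact a two-sided bound.
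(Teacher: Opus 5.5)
Your proof is correct and follows the paper's argument item by item. The differences are cosmetic: in (4) you argue by contradiction along $\sigma_j=1/j$, whereas the paper sets $\sigma=m/6$ with $m=\min g_0$ over the compact set $\{|F|^2\le5\}\setminus U$; in (5) you bound the two terms of \eqref{eq:D2W0} separately, whereas the paper simply notes that $D^2W_0(G)[A,A]$ is a polynomial of degree two in $A$ and at most six in $G$. Both versions rest on the same compactness (coming from $\sigma|F|^2\le\gamma_\sigma$) and the same polynomial-growth count.
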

\begin{proof}
(i) Clearly $g_0\ge0$. If $g_0(F)=0$, then $\cof F=F$ and $\det F=1$, so
$F^TF=F^T\cof F=\Id$ by \eqref{eq:cof}, and $F\in\SO$. Conversely $\cof Q=Q$ and $\det Q=1$
for $Q\in\SO$.

(ii) follows from \eqref{eq:cofQ}, $|QX|=|X|$, (i) and $|Q|^2=3$ for $Q\in\SO$.

(iii) By \eqref{eq:Taffine}, $\gamma_\sigma(F+ta\otimes b)=\Gamma_\sigma\bigl(T(F)+t\,(a\otimes b,V_{\cof},V_{\det})\bigr)$,
whose second derivative in $t$ is
$2|a\otimes b-V_{\cof}|^2+2V_{\det}^2+2\sigma|a\otimes b|^2$.

(iv) The set $Z:=\{F:|F|^2\le5\}\setminus U$ is compact and $g_0>0$ on $Z$ by (i); let
$m:=\min_Zg_0>0$ (or $m:=1$ if $Z=\emptyset$) and $\sigma:=m/6$. If $\gamma_\sigma(F)\le5\sigma$,
then $\sigma|F|^2\le5\sigma$ and $g_0(F)\le5\sigma<m$ by (ii), hence $|F|^2\le 5$ and $F\notin Z$,
i.e.\ $F\in U$.

(v) Since $W_0$ is a polynomial of degree $8$, $D^2W_0(G)[A,A]$ is a polynomial in $(G,A)$
which is homogeneous of degree two in $A$ and of degree at most six in $G$.
\end{proof}

\begin{remark}
The determinant term in $g_0$ is essential: $|F-\cof F|^2$ vanishes also at $F=0$, where $W_0$
need not be rank-one convex.
\end{remark}

\subsection*{Proof of Theorem~\ref{thm:main}}
Let $\delta>0$ be as in Proposition~\ref{prop:rc}, $\sigma$ as in Lemma~\ref{lemma:glob}(iv)
with $U=(\SO)_\delta$, $C$ as in Lemma~\ref{lemma:glob}(v), and
\[
\chi(s):=\int_{4\sigma}^s4\eta(r)(r-4\sigma)^3\,dr,\qquad K:=\frac{27C}{\sigma^4}+1,\qquad
\Phi:=\chi\circ\gamma_\sigma,\qquad W:=W_0+K\Phi ,
\]
where $\eta\in C^\infty(\R)$ is nondecreasing, $\eta=0$ on $(-\infty,4\sigma]$ and $\eta=1$ on
$[5\sigma,\infty)$. Then $\chi\in C^\infty(\R)$, $\chi=0$ on $(-\infty,4\sigma]$,
$\chi'(s)=4(s-4\sigma)^3$ for $s\ge5\sigma$, and $\chi$ is nondecreasing and convex, since
$\chi''(s)=4\eta'(s)(s-4\sigma)^3+12\eta(s)(s-4\sigma)^2\ge0$ for $s\ge4\sigma$. Hence
$\Phi=(\chi\circ\Gamma_\sigma)\circ T$ is polyconvex, of class $C^\infty$, and
frame-indifferent by Lemma~\ref{lemma:glob}(ii), and
$W\in C^\infty(\Rnn)$ satisfies \eqref{eq:fi}. Moreover $W=W_0$ on the open set
$\{\gamma_\sigma<4\sigma\}$, which contains $\SO$ since $\gamma_\sigma=3\sigma$ there.

\emph{Rank-one convexity.} Fix $F$ and $A=a\otimes b$, let $f(t):=W(F+tA)$,
$h(t):=\gamma_\sigma(F+tA)$, and $G:=F+tA$. Then
\[
f''(t)=D^2W_0(G)[A,A]+K\bigl(\chi''(h)h'^2+\chi'(h)h''\bigr),
\]
and $\chi',\chi''\ge0$, $h''\ge2\sigma|A|^2$ by Lemma~\ref{lemma:glob}(iii).
If $h(t)<5\sigma$, then $G\in (\SO)_\delta$ by the choice of $\sigma$, and $f''(t)\ge0$ by
Proposition~\ref{prop:rc}. If $h(t)\ge5\sigma$, then $h-4\sigma\ge h/5$, so
$\chi'(h)h''\ge\frac{4}{125}h^3\cdot2\sigma|A|^2$; and $\sigma|G|^2\le h$ gives
$1+|G|^2\le\frac{6h}{5\sigma}$. By Lemma~\ref{lemma:glob}(v),
\[
f''(t)\ge-C\Bigl(\frac{6h}{5\sigma}\Bigr)^3|A|^2+\frac{8K\sigma}{125}h^3|A|^2
=\Bigl(\frac{8K\sigma}{125}-\frac{216C}{125\sigma^3}\Bigr)h^3|A|^2\ge0
\]
by the choice of $K$. Hence $f''\ge0$ on $\R$ and $f$ is convex.

\emph{Failure of quasiconvexity.} Choose $\rho>0$ with $B_\rho(\Id)\subset\{\gamma_\sigma<4\sigma\}$.
By Proposition~\ref{prop:notqc} there is $\varepsilon>0$ with
$\Id+\varepsilon\nabla\varphi(x)\in B_\rho(\Id)$ for all $x$ and
$\avg W(\Id+\varepsilon\nabla\varphi)\,dx=\avg W_0(\Id+\varepsilon\nabla\varphi)\,dx<0=W(\Id)$.
By Lemma~\ref{lemma:periodic}, $W$ is not quasiconvex. \qed

\begin{remark} %[Further properties]
Since only the values of $W$ near $\SO$ matter for the failure of
quasiconvexity, one may add any frame-indifferent polyconvex function vanishing
near $\SO$. For instance, adding $\theta(\det F)$ with $\theta$ convex, $\theta=0$ on
$[\frac12,\infty)$ and $\theta(t)\to\infty$ as $t\downarrow0$ (and $\theta=\infty$ on
$(-\infty,0]$) gives an energy with $W(F)\to\infty$ as $\det F\downarrow0$.
\end{remark}

\begin{remark}
The function $W$ coincides with $W_0$ near $\SO$,
but it is not a polynomial. A polynomial example is obtained by replacing the cutoff
$\chi\circ\gamma_\sigma$ by a power of a polynomial which vanishes exactly on $\SO$.
For example, one can take
$W_1:=W_0+K\zeta^4$
where
\begin{equation*}%\label{eq:zeta}
\begin{gathered}
\zeta(F):=4|F-\cof F|^2+(2\det F-3)^2+2|F|^2-7
=
4g_0(F)+2\bigl(|F|^2-2\det F-1\bigr)
\end{gathered}
\end{equation*}
and $K$ is chosen sufficiently large.
\end{remark}

\section*{Acknowledgements}
This work was partially supported
by the Deutsche Forschungsgemeinschaft (DFG) through project CRC~1720 -- 539309657, project
SPP~2256 -- 441468770, and project SPP~2256 -- 523956128.
The authors acknowledge Anthropic’s Claude for Scientists program, which provided reduced-cost access to Claude Code used in this research.

\section*{Statement on the use of artificial intelligence}
During the preparation of this work the authors used the large language model Claude
Opus 5.5 (Anthropic), accessed through the agentic coding tool Claude Code. The model was used
(i) to write the Lean~4 formalization of Theorems~\ref{thm:main} and~\ref{thm:linear}
\cite{LeanCode}, which was produced entirely by the model, with guidance from the authors on
the statements and the proof strategy; and (ii) to assist in the development of the
construction and of the proofs, for symbolic and numerical computations, and in drafting
and revising the text of the manuscript. The authors have checked all mathematical arguments
in the paper and the formal statements in the Lean code (file \texttt{Statement.lean}), which
have been verified by the Lean proof checker. The authors reviewed and edited all content
generated by the model and take full responsibility for the content of this article. The
model is not an author of this work.

\printbibliography

\end{document}